\newcommand{\zb}{\mathbb{Z}}
\newcommand{\qb}{\mathbb{Q}}
\newcommand{\cb}{\mathbb{C}}
\newcommand{\pb}{\mathbb{P}}
\newcommand{\al}{\alpha}
\newcommand{\eps}{\varepsilon}
\newcommand{\ld}{\lambda}
\newcommand{\vph}{\varphi}
\newcommand{\fl}{\rightarrow}
\newcommand{\gfl}{\longrightarrow}
\newcommand{\infl}{\rightarrowtail}
\newcommand{\dr}{\ar@{->}[r]}
\newcommand{\dri}{\ar@{>->}[r]}
\newcommand{\drp}{\ar@{-->}[r]}
\newcommand{\dre}{\ar@{->>}[r]}
\newcommand{\dreg}{\ar@{=}[r]}
\newcommand{\drm}{\ar@{^{(}->}[r]}
\newcommand{\ddr}{\ar@{->}[rr]}
\newcommand{\ddre}{\ar@{->>}[rr]}
\newcommand{\ddreg}{\ar@{=}[rr]}
\newcommand{\ha}{\ar@{->}[u]}
\newcommand{\hae}{\ar@{->>}[u]}
\newcommand{\hap}{\ar@{-->}[u]}
\newcommand{\ham}{\ar@{^{(}->}[u]}
\newcommand{\hham}{\ar@{^{(}->}[uu]}
\newcommand{\hag}{\ar@{->}[ul]}
\newcommand{\hagm}{\ar@{^{(}->}[ul]}
\newcommand{\hagp}{\ar@{-->}[ul]}
\newcommand{\hdr}{\ar@{->}[ur]}
\newcommand{\hdrm}{\ar@{^{(}->}[ur]}
\newcommand{\hdri}{\ar@{>->}[ur]}
\newcommand{\hdre}{\ar@{->>}[ur]}
\newcommand{\hdrp}{\ar@{-->}[ur]}
\newcommand{\bas}{\ar@{->}[d]}
\newcommand{\bbas}{\ar@{->}[dd]}
\newcommand{\basm}{\ar@{^{(}->}[d]}
\newcommand{\basp}{\ar@{-->}[d]}
\newcommand{\baseg}{\ar@{=}[d]}
\newcommand{\bbaseg}{\ar@{=}[dd]}
\newcommand{\bdr}{\ar@{->}[dr]}
\newcommand{\bdre}{\ar@{->>}[dr]}
\newcommand{\bbdr}{\ar@{->}[ddr]}
\newcommand{\bddr}{\ar@{->}[drr]}
\newcommand{\bg}{\ar@{->}[dl]}
\newcommand{\bgm}{\ar@{^{(}->}[dl]}
\newcommand{\bgp}{\ar@{-->}[dl]}
\newcommand{\bggp}{\ar@{-->}[dll]}
\newcommand{\bbgp}{\ar@{-->}[ddl]}
\newcommand{\cat}{\mathcal{C}}
\newcommand{\dc}{\mathcal{D}}
\newcommand{\ec}{\mathcal{E}}
\newcommand{\hc}{\mathcal{H}}
\newcommand{\lc}{\mathcal{L}}
\newcommand{\pc}{\mathcal{P}}
\newcommand{\rc}{\mathcal{R}}
\newcommand{\wc}{\mathcal{W}}
\newcommand{\proj}{\operatorname{proj}}
\newcommand{\modb}{\mathrm{mod}B}
\newcommand{\moda}{\operatorname{mod}A}
\newcommand{\ind}{\operatorname{ind}}
\newcommand{\coi}{\operatorname{coind}}
\newcommand{\mte}{\operatorname{mt}(\eps)}
\newcommand{\susp}{\Sigma}
\newcommand{\susm}{\Sigma^{-1}}
\newcommand{\ke}{\operatorname{Ker}}
\newcommand{\im}{\operatorname{Im}}
\newcommand{\coke}{\operatorname{Coker}}
\newcommand{\homb}{\operatorname{Hom}_B}
\newcommand{\ext}{\mathrm{Ext}}
\newcommand{\homph}{\operatorname{Hom}}
\newcommand{\kzero}{K_{0}}
\newcommand{\ko}{K_0(\operatorname{mod}B)}
\newcommand{\kop}{K_0(\operatorname{proj}B)}
\newcommand{\clm}{\cat(L,\susp M)}
\newcommand{\elm}{\ext^1_\ec(L,M)}
\newcommand{\clmy}{\cat(L,\susp M)_{\class}}
\newcommand{\elmy}{\cat(L,\susp M)_{\dclass}}
\newcommand{\pclm}{\pb\cat(L,\susp M)}
\newcommand{\pelm}{\pb\elm}
\newcommand{\pcml}{\pb\cat(M,\susp L)}
\newcommand{\peml}{\pb\ext^1_\ec(M,L)}
\newcommand{\pclmy}{\pb\cat(L,\susp M)_{\class}}
\newcommand{\pelmy}{\pb\cat(L,\susp M)_{\dclass}}
\newcommand{\pcmly}{\pb\cat(M,\susp L)_{\class}}
\newcommand{\pemly}{\pb\cat(M,\susp L)_{\dclass}}
\newcommand{\class}{\langle \, Y \rangle}
\newcommand{\dclass}{{\langle\langle \, Y \rangle\rangle}}
\newcommand{\wlmg}{W_{L M}^Y(g)}
\newcommand{\ewlmg}{\wc_{L M}^Y(g)}
\newcommand{\wmlg}{W_{M L}^Y(g)}
\newcommand{\ewmlg}{\wc_{M L}^Y(g)}
\newcommand{\wef}{W_{L M}^Y(e,f)}
\newcommand{\ewef}{\wc_{L M}^Y(e,f)}
\newcommand{\wefg}{W_{L M}^Y(e,f,g)}
\newcommand{\ewefg}{\wc_{L M}^Y(e,f,g)}
\newcommand{\elef}{L(e,f)}
\newcommand{\eelef}{\lc(e,f)}
\newcommand{\elun}{L_1(e,f)}
\newcommand{\eelun}{\lc_1(e,f)}
\newcommand{\elde}{L_2(e,f)}
\newcommand{\eelde}{\lc_2(e,f)}
\newcommand{\cef}{C_{e,f}}
\newcommand{\ecef}{\cat_{e,f}}
\newcommand{\cefg}{C_{e,f}(Y,g)}
\newcommand{\ecefg}{\cat_{e,f}(Y,g)}
\newcommand{\vg}{V_{L M}^Y(g)}
\newcommand{\wmlefg}{W_{M L}^Y(f,e,g)}
\newcommand{\ewmlefg}{\wc_{M L}^Y(f,e,g)}
\newcommand{\wg}{W_{L M}^Y(g)}
\newcommand{\ewg}{\wc_{L M}^Y(g)}
\newcommand{\gre}{\operatorname{Gr}_e}
\newcommand{\grf}{\operatorname{Gr}_f}
\newcommand{\grg}{\operatorname{Gr}_g}
\newcommand{\gr}{\operatorname{Gr}}
\newcommand{\vdim}{\underline{\dim}}
\newcommand{\rep}{\operatorname{rep}}
\newcommand{\yc}{\mathcal{Y}}
\newcommand{\End}{\operatorname{End}}
\newcommand{\ens}[1]{\left\{ #1 \right\}}
\newcommand{\ps}[2]{\langle #1 \,,\,#2\rangle}
\newcommand{\gruv}{\operatorname{Gr}_{U,V}}
\newcommand{\had}{\ar@[]}
\newcommand{\per}{\operatorname{per}}
\newcommand{\perg}{\per\Gamma}
\newcommand{\fc}{\mathcal{F}}
\newcommand{\is}{\underline{i}}
\newcommand{\js}{\underline{j}}
\newcommand{\ls}{\underline{l}}
\newcommand{\xs}{\underline{x}}
\newtheorem{theo}{Theorem}[section] 
\newtheorem{lem}[theo]{Lemma}     
\newtheorem{cor}[theo]{Corollary}
\newtheorem{prop}[theo]{Proposition}
\title[Cluster characters II]
 {Cluster characters II: A multiplication formula} 
\author{Yann Palu}
\begin{document}
\maketitle

\begin{abstract}
Let $\mathcal{C}$ be a Hom-finite triangulated
2-Calabi--Yau category with a cluster tilting
object. Under some constructibility assumptions
on $\mathcal{C}$ which are satisfied for instance
by cluster categories, by generalized cluster categories
and by stable categories of modules over a preprojective
algebra of Dynkin type, we prove a multiplication formula
for the cluster character associated with any cluster
tilting object. This formula generalizes those
obtained by Caldero--Keller for representation finite
path algebras and by Xiao--Xu for finite-dimensional
path algebras. We prove an analogous formula
for the cluster character defined by Fu--Keller
in the setup of Frobenius categories.
It is similar to a formula obtained
by Geiss--Leclerc--Schr\"oer in the context of
preprojective algebras.
\end{abstract}

\section*{Introduction} 
\label{intro}

In recent years, the link between Fomin--Zelevinsky's
cluster algebras~\cite{FZ1} and the representation theory
of quivers and finite-dimensional algebras has been
investigated intensely, cf. for example the surveys~\cite{BM},
\cite{GLSicra}, \cite{Kcategorification}.
In its most tangible form, this link is given by a map taking
objects of cluster categories to elements of cluster algebras.
Such a map was first constructed by P.~Caldero and
F.~Chapoton~\cite{CC} for cluster categories
and cluster algebras associated with Dynkin quivers.
Another approach, leading to proofs of several conjectures
on cluster algebras in a more general context,
can be found in~\cite{DWZ1},~\cite{DWZ2} (for proofs
relying on the use of a Caldero--Chapoton map, see
\cite{PlamondonHomInfinite}, \cite{PlamondonClusterAlgebras}).

The results of P. Caldero and B.~Keller \cite{CK1} yield two
multiplication formulae for the Caldero--Chapoton map
of cluster categories associated with Dynkin quivers.
The first one categorifies the exchange relations of cluster
variables and only applies to objects $L$ and $M$
such that $\ext^1(L,M)$ is of dimension $1$.
The second one generalizes it to arbitrary dimensions,
and yields some new relations
in the associated cluster algebras.
These relations very much resemble relations in
dual Ringel--Hall algebras~\cite[section 5.5]{SchiffmannHall}.
Motivated by these results,
C. Geiss, B. Leclerc and J. Schr\"oer~\cite{GLSmf}
proved two analogous formulae for module
categories over preprojective algebras.
In this latter situation, the number of
isomorphism classes of indecomposable objects is usually infinite.
Generalizations of the first formula were proved
in~\cite{CK2} for cluster categories associated with
any acyclic quiver, and later in~\cite{Pcc} for Hom-finite
2-Calabi--Yau triangulated categories. A generalisation to the
Hom-infinite case can be found in~\cite{PlamondonHomInfinite},
and a version for quantum cluster algebras in~\cite{Qin}.
The first generalization of the second multiplication formula,
by A. Hubery (see~\cite{HubCluster}), was based on the existence of Hall polynomials
which he proved in the affine case~\cite{HubHallPoly},
generalizing Ringel's result~\cite{RingelHallPoly}
for Dynkin quivers. Staying close to this point of view,
J. Xiao and F. Xu proved in~\cite{XXGreen} a projective version of
Green's formula~\cite{RingelGreen} and applied it to generalize the multiplication formula
for acyclic cluster algebras. Another proof of this formula
was found by F. Xu in~\cite{Xu}, who used the 2-Calabi--Yau property instead
of Green's formula.
Our aim in this paper is to generalize the second multiplication formula
to more general 2-Calabi--Yau categories for the
cluster character associated with an arbitrary
cluster tilting object. This in particular applies to
the generalized cluster categories introduced by C.~Amiot~\cite{Acqw}
and to stable categories of modules over a preprojective
algebra.

Assume that the triangulated category $\cat$
is the stable category
of a Hom-finite Frobenius category $\ec$. Then C. Fu
and B. Keller defined a cluster character on $\ec$,
which "lifts" the one on $\cat$. We prove that it
satisfies the same multiplication formula as
the one proved by Geiss--Leclerc--Schr\"oer
in \cite{GLSmf}.

The paper is organized as follows:
In the first section, we fix some notations and state
our main result: A multiplication formula for
the cluster character associated with any cluster tilting object.
In section~\ref{section: constr}, we recall some
definitions and prove the `constructibility
of kernels and cokernels' in modules categories.
We apply these facts to prove that:
\begin{itemize}
 \item If the triangulated category has constructible cones
(see section~\ref{ssection: mf}),
the sets under consideration in the multiplication formula, and in
its proof, are constructible.
 \item Stable categories of Hom-finite Frobenius categories
have constructible cones.
 \item Generalized cluster categories defined in~\cite{Acqw}
have constructible cones.
\end{itemize}
Thus, all of the Hom-finite
2-Calabi--Yau triangulated categories related
to cluster algebras which have been introduced so far
have constructible cones. Notably this holds for
cluster categories associated with acyclic quivers,
and for the stable categories associated with the exact
subcategories of module categories over preprojective
algebras constructed in~\cite{GLSflag} and ~\cite{BIRS}.
In section \ref{section: proof mf}, we prove the main theorem.
In the last section, we consider the setup of Hom-finite
Frobenius categories. We prove a multiplication formula
for the cluster character defined by Fu--Keller in \cite{FK}.

\tableofcontents

\section{Notations and main result}\label{section: notations}

Let $k$ be the field of complex numbers.
The only place where we will need more than
the fact that $k$ is
an algebraically closed field
is proposition~\ref{prop: Dim}
in section~\ref{ssection: def constr}.
See~\cite[section 3.3]{JoyceConstr} for an explanation,
illustrated with an example, of the fact that
the theory of constructible functions does not extend to
fields of positive characteristic.
Let $\cat$ be a Hom-finite, 2-Calabi--Yau,
Krull--Schmidt $k$-category
which admits a basic cluster tilting object $T$.
In order to prove the main theorem,
a constructibility hypothesis will be needed.
This hypothesis is precisely stated in section~\ref{ssection: H}
and it will always be explicitly stated when it is assumed.
Stable categories of Hom-finite Frobenius categories
satisfy this constructibility hypothesis,
cf. section~\ref{ssection: H stable}, so that the main theorem
applies to cluster
categories (thanks to the construction in~\cite[Theorem 2.1]{GLSterminal}),
to stable module categories over preprojective algebras...
Moreover, the main theorem applies to the generalized cluster categories
of~\cite{Acqw}, cf. section~\ref{ssection: H cqw}.

We let $B$ denote the endomorphism algebra
of $T$ in $\cat$, and we let $F = \cat(T,?)$ denote
the covariant functor from $\cat$ to $\modb$
co-represented by $T$.
We denote the image in $\qb(x_1,\ldots,x_n)$ of an object $M$ in $\cat$ under
the cluster character associated with $T$ (see~\cite{Pcc}) by $X^T_M$.
Before recalling the formula for $X^T_M$,
we need to introduce some notation.
Let $Q_T$ be the Gabriel quiver of $B$, and denote
by $1,\ldots,n$ its vertices. For each vertex $i$,
denote by $S_i$ (resp. $P_i$) the corresponding simple
(resp. projective) module.
For any two finite-dimensional $B$-modules $L$ and $N$,
define
\begin{eqnarray*}
\langle L,N \rangle_{\;\,} & = & \dim \homb(L,N) - \dim \ext^1_B(L,N),\\
\langle L,N \rangle_a & = & \langle L,N \rangle - \langle N,L \rangle.
\end{eqnarray*}
As shown in~\cite[Section 3]{Pcc}, the form $\langle \;,\; \rangle_a$
descends to the Grothendieck group $\ko$
(that is, it only depends on the dimension vectors of $L$ and $N$).
Note that this would not be true for the form $\langle \;,\; \rangle$
in general, since $B$ is quite often of infinite global dimension
(see~\cite{KR1}). For a $B$-module $L$, the projective variety
$\gre L$ is the Grassmannian of submodules
of $L$ with dimension vector $e\in\ko$.
For any object $X\in\cat$, there are triangles
$$
T_\beta \fl T_\al \fl X \fl \susp T_\beta
\; \text{ and } \;
\susp T_\delta \fl X \fl \susp^2 T_\gamma \fl \susp^2 T_\delta,
$$
with $T_\al,\ldots,T_\delta$ in add$\,T$ (see~\cite[Proposition 2.1]{KR1}),
which are triangulated analogues of projective presentations and injective
copresentations respectively.
The index and coindex of $X$ (with respect to $T$) are the following classes
in $\kop$:
$$
\ind X = \ind_T X = [FT_\al] - [FT_\beta]
\; \text{ and } \;
\coi X = \coi_T X = [FT_\gamma] - [FT_\delta].
$$
For some properties of the index, see~\cite{Pcc}, 
and for a more thorough study, see~\cite{DK}, \cite{FK}
and~\cite[Section 4.2]{PlamondonClusterAlgebras}.
Then we have:
$$
X^T_M = \xs^{-\coi M} \sum_e \chi(\gre FM) \prod_{i=1}^n x_i^{\langle S_i,e\rangle_a},
$$
where the sum runs over all classes $e\in\ko$.
For any two objects $L$ and $M$ in $\cat$, and any morphism $\eps$
in $\clm$, we denote the isomorphism class of
objects $Y$
appearing in a triangle of the form
$$M\gfl Y\gfl L\stackrel{\eps}{\gfl} \susp M$$
by $\mte$ (the middle term of $\eps$).
Note that any two such objects $Y$ are isomorphic.

\subsection{$X^T$-stratification}

Let $L$ and $M$ be objects in $\cat$.
If an object $Y$ of $\cat$ belongs to the class $\mte$
for some morphism $\eps$ in $\clm$,
we let $\class$ denote the set of all
isomorphism classes of objects $Y'\in\cat$
such that:
\begin{itemize}
 \item $Y'$ is the middle term of some morphism in $\clm$,
 \item $\coi Y' = \coi Y$ and
 \item for all $e$ in $\ko$, we have $\chi\big{(}\gre(FY')\big{)}
= \chi\big{(}\gre(FY)\big{)}$.
\end{itemize}
The equality of classes $\class=\langle Y'\rangle$
yields an equivalence relation on the `set'
of middle terms of morphisms in $\clm$.
Fix a set $\yc$ of representatives for this relation.
Further, we denote the set of all $\eps$
with $\mte\in\class$ by $\clmy$, and
the set of $\eps'\in\clm$
such that $X^T_{\operatorname{mt}(\eps')}=X^T_{\mte}$
by $\langle\eps\rangle$.
It will be proven in section~\ref{ssection: clmy}
that if the cylinders of the morphisms $L\fl\susp M$
are constructible with respect to $T$
in the sense of section~\ref{ssection: H} below,
then the sets $\clmy$ are constructible, and
the set $\yc$ is finite.

Remark that if $Y'$ belongs to $\class$, then $X^T_{Y'} = X^T_{Y}$.
Hence the fibers of the map sending $\eps$ to $X^T_{\mte}$
are finite unions of sets $\clmy$.
Therefore, the sets $\langle\eps\rangle$ are constructible;
we have
$$
\clm = \coprod_{\eps\in\rc}\langle\eps\rangle
$$
for some finite set $\rc\subset\clm$, and
$$
\clm = \coprod_{Y\in\yc}\clmy
$$
is a refinement of the previous decomposition.

\subsection{The variety $\rep_dBQ$}

Let $V$ be a finite dimensional
$k$-vector space. We denote by $\rep_B'(V)$
the set of morphisms of $k$-algebras from $B^\text{op}$
to $\End_k(V)$.
Since $B$ is finitely generated,
the set $\rep_B'(V)$ is a closed subvariety
of some finite product of copies of $\End_k(V)$.

Let $Q$ be a finite quiver, and let $d = (d_i)_{i\in Q_0}$
be a tuple of non-negative integers. A $d$-\emph{dimensional
matrix representation} of $Q$ in $\modb$ is given by
\begin{itemize}
 \item a right $B$-module structure on $k^{d_i}$ for
each vertex $i$ of $Q$ and
 \item a $B$-linear map $k^{d_i}\fl k^{d_j}$ for
each arrow $\al : i\fl j$ of $Q$.
\end{itemize}
Clearly, for fixed $d$, the $d$-dimensional matrix representations
of $Q$ in $\modb$ form an affine variety $\rep_d BQ$ on which
the group $GL(d)=\prod_{i\in Q_0}GL_{d_i}(k)$ acts by changing
the bases in the spaces $k^{d_i}$. We write $\rep_d BQ / GL(d)$
for the set of orbits.

\subsection{Constructible cones}\label{ssection: H}

Let $\overrightarrow{A_4}$ be the quiver:
$1\fl 2\fl 3\fl 4$. Let $T$, $L$ and $M$ be objects of
$\cat$. Let $d_\text{max}$ be the $4$-tuple of integers
$$(\dim FM, \dim FM+\dim FL, \dim FL, \dim F\susp M).$$
Let $\Phi_{L,M}$ be the map from $\clm$ to
$$
\coprod_{d\leq d_\text{max}}\rep_d (B\overrightarrow{A_4})/GL(d)
$$
sending a morphism $\eps$ to the orbit of the exact sequence
of $B$-modules
$$\xymatrix@-.1pc{\cat(T,M) \dr^{Fi} & \cat(T,Y) \dr^{Fp} &
\cat(T,L) \dr^{F\eps\;\;\;} & \cat(T,\susp M),}$$
where
$M\stackrel{i}{\gfl} Y \stackrel{p}{\gfl} L \stackrel{\eps}{\gfl} \susp M$
is a triangle in $\cat$.
The cylinders over the morphisms $L\fl\susp M$ are
\emph{constructible with respect to $T$} if
the map $\Phi_{L,M}$ lifts to a constructible
map
$$
\vph_{L,M} : \clm \gfl \coprod_{d\leq d_\text{max}} \rep_d(B\overrightarrow{A_4})
$$
(see section~\ref{ssection: def constr}).
The category $\cat$ is said to have constructible cones
if this holds
for arbitrary objects $L$,$M$ and $T$.

\subsection{Main result}\label{ssection: mf}

Let $f$ be a constructible function from an algebraic variety over $k$
to any abelian group,
and let $C$ be a constructible subset of this variety. Then one defines
``the integral of $f$ on $C$ with respect to the Euler characteristic'' to be
$$
\int_C f = \sum_{x\in f(C)}\chi\big(C\cap f^{-1}(x) \big)x,
$$
cf. for example the introduction of~\cite{Lconstr}.
Our aim in this paper is to prove the following:
\begin{theo}\label{theo: mf} Let $T$ be any
cluster tilting object in $\cat$. Let  $L$ and $M$ be
two objects such that the cylinders over the morphisms
$L\fl\susp M$ and $M\fl\susp L$ are constructible
with respect to $T$. Then we have:
$$
\chi(\pclm)X^T_L X^T_M = \int_{[\eps] \in \pclm} X^T_{\mte} 
+ \int_{[\eps] \in \pcml} X^T_{\mte},
$$
where $[\eps]$ denotes the class in $\pclm$
of a non zero morphism $\eps$ in $\clm$.
\end{theo}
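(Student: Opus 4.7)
The plan is to expand both sides of the formula in $\qb(x_1, \ldots, x_n)$ using the Grassmannian formula for $X^T$ and to match coefficients of Laurent monomials one by one.

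First, I would use the $X^T$-stratification of section~\ref{section: notations} to convert the two integrals into finite sums,
$$\int_{[\eps] \in \pclm} X^T_{\mte} \; = \; \sum_{Y \in \yc} \chi(\pclmy)\, X^T_Y$$
and similarly for the $\pcml$-integral, with a finite set $\yc'$ of representatives of its $X^T$-classes; the finiteness of $\yc$ and the constructibility of each $\pclmy$ is precisely what section~\ref{ssection: clmy} will establish from the constructibility of cones. Next I would substitute $X^T_M = \xs^{-\coi M} \sum_e \chi(\gre FM) \prod_i x_i^{\langle S_i, e\rangle_a}$ on both sides and match the coefficient of each Laurent monomial $\xs^a$. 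The exponent-matching relies on the identity in $\kop$ expressing $\coi Y$ in terms of $\coi L$, $\coi M$ and the class of $\im F\eps$, which was already used in~\cite{Pcc}. This reduces the theorem to a single Euler characteristic identity of the shape
$$\chi(\pclm)\, \chi(\gre FL)\, \chi(\grf FM) \; = \; \sum_{Y, g} \chi(\pclmy)\, \chi(\grg FY) \; + \; (\text{symmetric term involving } \pcml),$$
in which the right-hand sum ranges over those $(Y, g) \in \yc \times \kop$ compatible with $(e, f)$ via the $\coi$-identity.

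The heart of the argument is the proof of this identity. Given a triangle $M \stackrel{i}{\gfl} Y \stackrel{p}{\gfl} L \stackrel{\eps}{\gfl} \susp M$, the four-term sequence $FM \to FY \to FL \to F\susp M$ lets one associate to any submodule $N \subseteq FY$ of dimension $g$ the pair $(N_L, N_M) := (Fp(N), (Fi)^{-1}(N))$. The constructibility of the lift $\vph_{L,M}$ from section~\ref{ssection: H} provides a family of such four-term sequences parametrized by $\clm$, and the constructibility of kernels and cokernels proved in section~\ref{section: constr} then makes the locus of triples $([\eps], N_L, N_M) \in \pclm \times \gre FL \times \grf FM$ that lift to some $N \subseteq F(\mte)$ into a constructible subset $Z_{e,f}$. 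Integrating over its fibers by Euler characteristic and keeping track of the correction to $g$ coming from $\im F\eps$ yields the sum over $Y \in \yc$. The complement of $Z_{e,f}$ in $\pclm \times \gre FL \times \grf FM$ is an obstructed locus, and this is where the 2-Calabi--Yau property intervenes: the bifunctorial duality $\cat(L, \susp M) \simeq D\cat(M, \susp L)$ matches, up to Euler characteristic, the obstructed triples on one side with the compatible triples on the other, producing the symmetric term involving $\pcml$. This duality mechanism is the same as the one used in~\cite{Pcc} and~\cite{Xu}.

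The main technical obstacle is controlling the fibration $Z_{e,f} \to \gre FL \times \grf FM$: one needs not merely that individual fibers are constructible, but that their Euler characteristics vary constructibly with $(N_L, N_M)$, so that the Fubini-style identity
$$\chi(Z_{e,f}) \;=\; \int_{(N_L, N_M) \in \gre FL \times \grf FM} \chi\big(\text{fiber over }(N_L, N_M)\big)$$
can be compared with $\chi(\pclm) \chi(\gre FL) \chi(\grf FM)$. This is exactly why the constructibility hypothesis is formulated in terms of a lift $\vph_{L,M}$ to the space of matrix representations of $B\overrightarrow{A_4}$ rather than just to $GL(d)$-orbits, and it is the payoff of the technical work in section~\ref{section: constr}.
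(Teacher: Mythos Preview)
Your outline follows the same overall strategy as the paper's proof: stratify $\pclm$ by the classes $\class$, expand both sides, split $\pclm\times\gre FL\times\grf FM$ into the locus $\elun$ of triples that lift to a submodule of the middle term and its complement $\elde$, and use the $2$-Calabi--Yau duality to trade $\elde$ for a sum over $\pcml$. So the architecture is right.

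However, the proposal omits the mechanism that actually makes the Euler-characteristic identity work. You write ``integrating over its fibers'' and a Fubini formula for the projection $Z_{e,f}\to\gre FL\times\grf FM$, but that projection is not the relevant one, and a generic Fubini argument does not by itself give the identity you need. The crucial input is that the fibers of the map
\[
\psi:\coprod_{Y}\wef\longrightarrow \elef,\qquad ([\eps],E)\longmapsto([\eps],(Fp)E,(Fi)^{-1}E)
\]
over $\elun$ are \emph{affine spaces} (this is the Caldero--Chapoton lemma, lemma~\ref{lem: CC}). Only because their Euler characteristic is $1$ do you get $\sum_{Y}\chi(\wef)=\chi(\elun)$, which, combined with $(\ast\ast)$ and the coindex identity, yields the $\pclm$-part of the formula. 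Your plan does not mention this, and without it the ``integration'' produces an uncontrolled weighted sum rather than the clean identity you want.

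The handling of the complement $\elde$ has the same gap. Saying that the $2$-CY duality ``matches obstructed triples with compatible triples on the other side'' is the right intuition, but the actual argument requires an auxiliary incidence variety $\cef$ equipped with two projections $p_1:\cef\to\elde$ and $p_2:\cefg\to\wmlefg$. One then proves (propositions~\ref{prop: CK1 3} and~\ref{prop: CK1 4}) that $p_1$ is surjective with fibers that are extensions of affine spaces and $p_2$ is surjective with affine fibers, so that $\chi(\elde)=\chi(\cef)$ and $\chi(\cefg)=\chi(\wmlefg)$. These two equalities, together with the exponent identity, are what convert the $\elde$-contribution into the $\pcml$-integral. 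Your proposal does not introduce $\cef$ or address these fiber computations, and they are where the $2$-CY pairing $\phi$ is really used, not merely as a dimension equality but pointwise via the condition $\phi(\susm\eps,\eta)\neq 0$.
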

The statement of the theorem is inspired from~\cite{GLSmf},
cf. also~\cite{XXGreen}. We will prove it in section~\ref{section: proof mf}.
Our proof is inspired from that of P. Caldero and B. Keller
in~\cite{CK1}. Note that in contrast with
the situation considered there, in the above formula,
an infinite number of isomorphism classes
$\mte$ may appear.

\section{Constructibility}\label{section: constr}

\subsection{Definitions}\label{ssection: def constr}

Let $X$ be a topological space.
A \emph{locally closed subset} of $X$ is the intersection of
a closed subset with an open one.
A \emph{constructible subset} is a finite (disjoint) union of
locally closed subsets. The family of constructible
subsets is the smallest one containing all open
(equivalently: closed) subsets
of $X$ and stable under taking finite intersections
and complements.
A function $f$ from $X$ to 
an abelian group is \emph{constructible} if it is
a finite $\zb$-linear combination of characteristic
functions of constructible subsets of $X$.
Equivalently, $f$ is constructible if it takes a
finite number of values and if its fibers are
constructible subsets of $X$.

For an algebraic variety $X$,
the ring of constructible functions from $X$ to $\zb$
is denoted by $CF(X)$.
The following proposition will be used,
as in~\cite{XXGreen}, in order to prove
lemma~\ref{lem: mue} of section~\ref{ssection: clmy}.

\begin{prop}\label{prop: Dim}
\cite[Proposition 4.1.31]{Dim}
Associated with any morphism of complex
algebraic varieties
$f:X\gfl Y$, there is a well-defined
push-forward homomorphism
$CF(f):CF(X)\gfl CF(Y)$. It is determined
by the property
$$
CF(f)(1_Z)(y) = \chi(f^{-1}(y)\cap Z)
$$
for any closed subvariety $Z$ in $X$
and any point $y\in Y$.
\end{prop}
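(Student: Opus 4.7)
The plan is to construct $CF(f)$ directly, first on indicator functions and then by $\zb$-linear extension, taking the formula of the proposition as its definition on a generating set.

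\textbf{Step 1: Extend the defining formula.} For any constructible subset $C\subseteq X$ (not merely a closed subvariety) and any $y\in Y$, I set
$$\bigl(CF(f)(1_C)\bigr)(y) \,:=\, \chi\bigl(f^{-1}(y)\cap C\bigr).$$
This makes sense because $f^{-1}(y)\cap C$ is a constructible subset of the complex algebraic variety $X$, and over $\cb$ any such subset has a well-defined Euler characteristic (via cohomology with compact supports, or equivalently via any finite decomposition into locally closed affine pieces). Crucially, this Euler characteristic is additive: $\chi(A\sqcup B) = \chi(A)+\chi(B)$ for disjoint constructible $A,B$. This is precisely the property that fails in positive characteristic and motivates the hypothesis $k=\cb$ noted in section~\ref{ssection: def constr}.

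\textbf{Step 2: Constructibility of the push-forward.} The main technical step is to check that, for each constructible $C$, the function $y\mapsto \chi(f^{-1}(y)\cap C)$ belongs to $CF(Y)$. I would invoke a Whitney stratification of the pair $(X,C)$ compatible with $f$, or equivalently Verdier's generic fibration theorem: there exist finite stratifications of $Y$ by locally closed subsets $Y_\al$ and of $C\cap f^{-1}(Y_\al)$ by locally closed pieces $C_{\al,\beta}$ such that each restriction
$$f|_{C_{\al,\beta}} : C_{\al,\beta} \gfl Y_\al$$
is a locally trivial topological fibration. The fibrewise Euler characteristic $y\mapsto \chi(f^{-1}(y)\cap C_{\al,\beta})$ is then locally constant on $Y_\al$, and additivity (Step 1) gives $\chi(f^{-1}(y)\cap C) = \sum_\beta \chi(f^{-1}(y)\cap C_{\al,\beta})$ for $y\in Y_\al$. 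Summing over the finitely many strata and their finitely many connected components exhibits $CF(f)(1_C)$ as a finite $\zb$-linear combination of characteristic functions of locally closed subsets of $Y$, hence as a constructible function.

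\textbf{Step 3: $\zb$-linear extension and well-definedness.} Extend $CF(f)$ to the whole of $CF(X)$ by $\zb$-linearity. Given two representations $\sum n_i 1_{C_i} = \sum m_j 1_{D_j}$ of the same element of $CF(X)$, pass to a common refinement whose pieces are pairwise disjoint and constructible; the additivity of $C\mapsto \chi(f^{-1}(y)\cap C)$ over disjoint unions of constructible sets then forces the two resulting push-forwards to coincide pointwise on $Y$. This proves that $CF(f)$ is a well-defined group homomorphism. Uniqueness of any homomorphism with the stated property is automatic because the $1_Z$, for $Z\subseteq X$ closed, already generate $CF(X)$ as an abelian group (any locally closed subset is a difference of closed ones).

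The hard part is Step 2; the rest is bookkeeping around Euler additivity. The stratification-plus-fibration argument relies on Thom--Mather or Verdier theory, and it is exactly here that $k=\cb$ is used in an essential way, both to have the topological notion of a locally trivial fibration and to have $\chi$ additive on constructible decompositions.
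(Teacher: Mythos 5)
Your argument is essentially correct, but note that the paper does not prove this proposition at all: it is quoted from Dimca \cite[Proposition 4.1.31]{Dim}, the only original remark being that the theory requires characteristic zero. So the real comparison is with the cited proof, and there your route is genuinely different. Dimca obtains $CF(f)$ sheaf-theoretically: $CF(X)$ is realized as the image of the Grothendieck group of the bounded constructible derived category under the local Euler characteristic, and the push-forward is the map induced by $Rf_!$, with well-definedness and constructibility coming from the stability of constructible complexes under direct image with compact supports. You instead construct $CF(f)$ directly on indicator functions by fibrewise Euler characteristic, prove constructibility of $y\mapsto\chi(f^{-1}(y)\cap C)$ by stratifying $f$ so that its restrictions to strata are locally trivial fibrations (Verdier/Thom--Mather), and extend by $\zb$-linearity, checking well-definedness on a common disjoint refinement; your uniqueness remark (closed subvarieties generate $CF(X)$ since any locally closed set is a difference of closed ones) is also correct. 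Both routes rest on the same hard input---stratification theory over $\cb$, equivalently constructibility of $Rf_!$---so nothing is lost in substance; your version is more elementary in that it avoids the sheaf formalism, at the price of citing the generic fibration theorem as a black box. Two points worth making explicit in your Step 2: the Euler characteristic must be taken with compact supports for additivity, which is harmless because $\chi=\chi_c$ for complex algebraic varieties and is precisely where $k=\cb$ enters; and the strata $Y_\alpha$ have only finitely many connected components in the strong topology, these being again constructible (Zariski and analytic connectedness agree for complex varieties), so that the final decomposition of $CF(f)(1_C)$ is indeed finite.
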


Let $X$ and $Y$ be algebraic varieties.
A map $f:X\gfl Y$ is
said to be \emph{constructible} if
there exists a decomposition of $X$ into a finite disjoint union
of locally closed subsets $X_i, i\in I$, such that
the restriction of $f$ to each $X_i$ is a morphism of algebraic varieties.
Note that the composition of two constructible maps is constructible, and that
the composition of a constructible function with a constructible map
is again a constructible function.

\subsection{Kernels and cokernels are constructible}\label{ssection: coker}

In section 2.1 of~\cite{Xu}, it is shown that
the kernel and cokernel of a morphism of modules over a path algebra
$\cb Q$ are constructible. In this section,  we give direct
proofs in the more general case where $\cb Q$ is replaced by a
finite dimensional algebra $B$.

Let $L$ and $M$ be two finite dimensional vector spaces
over the field $k$, of respective dimensions $n$ and $m$.
Let $N$ be a linear subspace of $M$.
Define $E_N$ to be the set of all morphisms $f\in\homph_k(L,M)$
such that $\im f \oplus N = M$.

\begin{lem}\label{lem: locally closed}
The set $E_N$
is a locally closed subset of $\homph_k(L,M)$.
\end{lem}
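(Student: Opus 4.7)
Write $s = \dim N$, so that the condition $\im f \oplus N = M$ forces $\dim \im f = m-s$. The plan is to rewrite this condition in terms of two separate rank conditions, one closed and one open in $\homph_k(L,M)$. Let $\pi : M \to M/N$ denote the canonical projection and consider the linear (in particular regular) map
$$
\pi_* : \homph_k(L,M) \gfl \homph_k(L,M/N), \quad f \longmapsto \pi \circ f.
$$

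The key observation is that the kernel of $\pi \circ f$ is exactly $f^{-1}(N)$, which contains $\ker f$ with equality if and only if $\im f \cap N = 0$. Equivalently, $\im f \cap N = 0$ is the same as $\operatorname{rank}(f) = \operatorname{rank}(\pi \circ f)$. On the other hand, $\im f + N = M$ is equivalent to $\pi \circ f$ being surjective, i.e.\ $\operatorname{rank}(\pi \circ f) = m-s$. Since one always has $\operatorname{rank}(f) \geq \operatorname{rank}(\pi \circ f)$, the conjunction of these two conditions is equivalent to
$$
\operatorname{rank}(f) \leq m-s \quad \text{and} \quad \operatorname{rank}(\pi \circ f) \geq m-s.
$$
I would check this equivalence carefully: the two inequalities together force both ranks to equal $m-s$, and one then recovers $\im f \oplus N = M$ from the discussion above.

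The conclusion then follows from the standard fact that for a linear map between fixed finite-dimensional vector spaces, the locus of rank $\leq r$ is closed (cut out by the vanishing of all $(r+1) \times (r+1)$ minors of the matrix) and the locus of rank $\geq r$ is open. Thus the set $C := \{f : \operatorname{rank}(f) \leq m-s\}$ is closed in $\homph_k(L,M)$, and the set $U := \{g : \operatorname{rank}(g) \geq m-s\}$ is open in $\homph_k(L,M/N)$. Since $\pi_*$ is continuous, $\pi_*^{-1}(U)$ is open in $\homph_k(L,M)$, and therefore
$$
E_N = C \cap \pi_*^{-1}(U)
$$
is locally closed.

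There is essentially no obstacle: the only point requiring care is the equivalence between the original condition $\im f \oplus N = M$ and the pair of rank conditions, which is a short dimension-count argument using $\ker(\pi\circ f) = f^{-1}(N)$ and the general inequality $\operatorname{rank}(f) \geq \operatorname{rank}(\pi\circ f)$.
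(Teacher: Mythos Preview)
Your proof is correct. Both your argument and the paper's write $E_N$ as the intersection of an open set with a closed set, and in fact the open condition is the same in both: surjectivity of $\pi\circ f$, which in the paper's coordinate notation is precisely the condition that some $r\times r$ minor of the submatrix $A_2$ is nonzero (with $r=m-s$). The difference lies in the closed condition. The paper works in coordinates and shows that, on the open set, the condition $\im f\cap N=0$ is equivalent to an explicit determinantal condition (its condition b$'$); you instead use the rank bound $\operatorname{rank}(f)\leq m-s$. These two closed sets are not equal in general, but both intersect the open set in exactly $E_N$. Your formulation is coordinate-free and slightly slicker; the paper's explicit matrix description, however, is reused immediately afterwards in Lemma~\ref{lem: coker}, where the coordinates are needed to write down the projection $p_f$ onto $N$ along $\im f$ as a regular function of the matrix entries.
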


\begin{proof}
Let $(u_1,\ldots,u_n)$ be a basis of $L$, and
let $(v_1,\ldots,v_m)$ be a basis of $M$ whose
$p$ first vectors form a basis of $N$.
Let $r$ be such that $r+p=m$.
Let $f:L\gfl M$ be a $k$-linear map, and denote by
$A=(a_{ij})$ its matrix in the bases
$(u_1,\ldots,u_n)$ and $(v_1,\ldots,v_m)$.
Denote by $A_1$ the submatrix of $A$ formed by
its first $p$ rows and by $A_2$ the one formed
by its last $r$ rows. For $t\leq n$, let
$P(t,n)$ be the set of all subsets of $\ens{1,\ldots,n}$
of cardinality $t$.

The map $f$ belongs to $E_N$ if and only if:
\begin{itemize}
 \item[a)] There exists $\js$ in $P(r,n)$
such that the submatrix $(a_{ij})_{i>p,j\in\js}$
has a non-zero determinant and
 \item[b)] if the last $r$ entries of a linear combination
of columns of $A$ vanish, then the combination itself vanishes.
\end{itemize}
Condition b) is equivalent to the inclusion
$\ke A_2 \subseteq \ke A_1$ and so to
the inclusion $\im (A_1^\text{t})\subseteq\im (A_2^\text{t})$.
Therefore, condition b) can be restated as condition b'):
\begin{itemize}
 \item[b')] For all $i_0\leq p$, and all $\ls\in P(r+1,n)$,
the determinant of the submatrix of $A$ obtained by taking
lines in $\ens{i_0,p+1,\ldots,m}$ and columns in $\ls$
vanishes.
\end{itemize}
Let $\Omega_{\js}$ be the set of all maps
that satisfy condition a) with respect to the index
set $\js$, and let $F$ be the set of
all maps that satisfy condition b').
For all $\js\in P(r,n)$, the set $\Omega_{\js}$ is
an open subset of $\homph_k(L,M)$ and the set
$F$ is a closed subset of $\homph_k(L,M)$.
Since we have the equality:
$$
E_N = \big(\bigcup_{\js\in P(r,n)}\Omega_{\js}\big)\cap F,
$$
the set $E_N$ is locally closed in $\homph_k(L,M)$.
\end{proof}

Let $\overrightarrow{A_2}$ be the quiver:
$1 \fl 2$.
\begin{lem}\label{lem: coker}
Let $B$ be a finite dimensional algebra, and
let $L$ and $M$ be finitely generated $B$-modules
of dimensions $n$ and $m$ respectively. The
map $c$ from $\homb(L,M)$ to
$\coprod_{d\leq m}\rep_{(m,d)} (B\overrightarrow{A_2})/GL(m,d)$
which sends a morphism $l$ to the orbit of the representation
$\xymatrix@-.4pc{M\dre & \coke l}$ lifts to a constructible
map from $\homb(L,M)$ to $\coprod_{d\leq m}\rep_{(m,d)} (B\overrightarrow{A_2})$.

Dually, the map from
$\homb(L,M)$ to $\coprod_{d\leq n}\rep_{(d,n)} (B\overrightarrow{A_2})/GL(d,n)$
which sends a morphism $l$ to the orbit of the representation
$\xymatrix@-.7pc{\ke l\;\; \dri & L}$ lifts to a constructible
map from $\homb(L,M)$ to $\coprod_{d\leq n}\rep_{(d,n)} (B\overrightarrow{A_2})$.
\end{lem}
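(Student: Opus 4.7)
My plan for the cokernel statement (the kernel case is strictly dual) is to construct a constructible stratification of $\homb(L,M)$ on each piece of which I can define an algebraic morphism into $\rep_{(m,d)}(B\overrightarrow{A_2})$ whose composition with the quotient to $GL(m,d)$-orbits is $c$. I begin by fixing a basis $(v_1,\ldots,v_m)$ of $M$; for each $d \leq m$ and each subset $J \subseteq \{1,\ldots,m\}$ of cardinality $d$, let $N_J$ denote the span of the $v_j$ for $j \in J$. By Lemma~\ref{lem: locally closed}, the set $E_{N_J}$ of $l \in \homph_k(L,M)$ with $\im l \oplus N_J = M$ is locally closed, so $E_{N_J} \cap \homb(L,M)$ is locally closed in $\homb(L,M)$, and for fixed $d$ the union over $|J|=d$ equals the stratum $\{l : \dim\coke l = d\}$. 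Linearly ordering the $J$'s and taking successive differences produces a finite disjoint decomposition of $\homb(L,M)$ into locally closed pieces.

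On each piece $E_{N_J}$, the composition $N_J \hookrightarrow M \twoheadrightarrow M/\im l$ is a $k$-linear isomorphism, which canonically identifies $M/\im l$ with $N_J \cong k^d$. Under this identification, the quotient map is the projection $\pi_J: M \twoheadrightarrow N_J$ along $\im l$, and the transported $B$-action on $k^d$ is $b \cdot v := \pi_J(b \cdot_M v)$; a direct check shows $\pi_J$ is $B$-linear for these structures, so $(M \xrightarrow{\pi_J} N_J)$ is a point of $\rep_{(m,d)}(B\overrightarrow{A_2})$ whose orbit recovers $c(l)$. It remains to show this point depends algebraically on $l$.

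For this, I would further refine $E_{N_J}$: for each $K \subseteq \{1,\ldots,n\}$ of cardinality $m-d$, let $\Omega_{J,K} \subseteq E_{N_J}$ be the open subset on which the $(J^c,K)$-submatrix $A_{J^c,K}$ of the matrix $A$ of $l$ is invertible; these cover $E_{N_J}$. On $\Omega_{J,K}$, solving for the image component yields an explicit formula of the form $\pi_J = \operatorname{Id}_M - A_{:,K}\, A_{J^c,K}^{-1}\, E_{J^c}$ (where $E_{J^c}$ extracts the $J^c$-coordinates), whose entries are regular in $l$ since $\det A_{J^c,K}$ is non-vanishing. Both $\pi_J$ and the induced $B$-action on $k^d$ are therefore polynomial in $l$ and $(\det A_{J^c,K})^{-1}$, giving a morphism of varieties $\Omega_{J,K} \gfl \rep_{(m,d)}(B\overrightarrow{A_2})$. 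Assembling across all triples $(d,J,K)$ (with a final bookkeeping refinement to make the domain pieces disjoint) produces the desired constructible lift. The main obstacle is precisely this double refinement — first by $J$ to identify the cokernel with a fixed coordinate subspace, then by $K$ to extract an explicit algebraic formula for $\pi_J$; once that is in place, the rest is routine. The kernel statement follows by applying the same argument to the transpose matrix of $l$, stratifying instead by complements to $\ke l$ inside $L$.
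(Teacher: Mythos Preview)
Your proof is correct and follows essentially the same route as the paper's: stratify $\homb(L,M)$ by the coordinate subspaces $N_J$ complementary to the image, then refine by the column set $K$ on which the relevant submatrix is invertible, and write down the projection onto $N_J$ explicitly as a rational function in the matrix entries with denominator $\det A_{J^c,K}$; the induced $B$-structure on $N_J$ is then visibly regular. The paper's formula $(1\;\; -CD^{-1})$ is exactly your $\pi_J$ in the special case where $J=\{1,\ldots,p\}$, and your extra bookkeeping step of making the strata disjoint is implicit in the paper but harmless to spell out.
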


\begin{proof}
Let us prove the first assertion.
We keep the notations of the proof of
lemma~\ref{lem: locally closed}.
For a subset $\is$ of $\ens{1,\ldots,m}$,
let $N_{\is}$ be the linear subspace of $M$
generated by $(v_i)_{i\in\is}$.
Then $\homb(L,M)$ is the union
of its intersections with each
$E_{N_{\is}}$, for~${\is\subseteq\ens{1,\ldots,m}}$. It is thus enough to consider
the restriction of the map $c$ to $E_N$,
where $\xymatrix@-.6pc{N\;\dri^{i_N} & M}$
is a given linear subspace of $M$.
Since the set $E_N$ is the union of
the locally closed subsets $\Omega_{\js}\cap F$,
for $\js\in P(r,n)$, we can fix such a $\js$ and
only consider the restriction of
$c$ to $\Omega_{\js}\cap F$.
Let $f$ be a morphism in $\homb(L,M)$ and
assume that $f$ is in $\Omega_{\js}\cap F$.
Then the cokernel of the $k$-linear map $f$ is $N$
and the projection $p_f$ of $M$ onto $N$ along $\im f$
is given by the $n\times p$ matrix
$\left( 1 \;\; -CD^{-1}\right)$, where
$C$ is the submatrix $(a_{ij})_{i\leq p, j\in\js}$ and
$D$ is the submatrix $(a_{ij})_{i > p, j\in\js}$.
Moreover, if we denote by $\rho^M\in\rep_B'(M)$
the structure of $B$-module of $M$, then the structure
of $B$-module $\rho$ of $N$ induced by $f$
is given by
$\rho(b) = p_f\circ\rho^M(b)\circ i_N$, for all $b\in B$.
\end{proof}

\subsection{Constructibility of $\clmy$}\label{ssection: clmy}

Let $k$, $\cat$ and $T$ be as in section~\ref{section: notations}.
Recall that $B$ denotes the endomorphism algebra
$\End_\cat(T)$. This algebra is the path algebra
of a quiver $Q_T$ with ideal of relations $I$.
Recall that we denote by $1,\ldots,n$ the
vertices of $Q_T$.

The following lemma is a particular case of
\cite[Proposition 4.1.31]{Dim}, and was already
stated in~\cite{XXGreen} for hereditary algebras.

\begin{lem}\label{lem: mue}
For any two dimension vectors $e$ and $d$ with
$e\leq d$, the function
\begin{eqnarray*}
\mu_e : \rep_d(Q_T,I) & \gfl & \zb \\
M & \longmapsto & \chi(\gre M)
\end{eqnarray*}
is constructible.
\end{lem}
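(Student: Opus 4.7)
The plan is to realize $\mu_e$ as the push-forward along a projection of the constant function $\mathbf{1}$ on a suitable incidence variety, and then invoke Proposition~\ref{prop: Dim}.

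More precisely, I would form the incidence set
$$\zc = \bigl\{ (M,N) \;\big|\; M\in\rep_d(Q_T,I),\; N=(N_i)_{i\in (Q_T)_0} \text{ is a subrepresentation of } M \text{ of dimension vector } e\bigr\}$$
sitting inside the product $\rep_d(Q_T,I)\times\prod_{i}\gr_{e_i}(k^{d_i})$, where $\gr_{e_i}(k^{d_i})$ is the ordinary Grassmannian of $e_i$-dimensional subspaces of $k^{d_i}$. The key point is that $\zc$ is a closed subvariety of this product: for a fixed dimension vector $e$, the conditions that a tuple $(N_i)$ is stable under each arrow $\alpha:i\to j$ of $Q_T$ (i.e.\ $M_\alpha(N_i)\subseteq N_j$) are closed conditions in the total space, as one checks using local frames of the tautological bundles on the Grassmannian factors; and the relations in $I$ are automatically satisfied once those in $M$ are.

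Let $\pi:\zc\gfl\rep_d(Q_T,I)$ be the projection onto the first factor. By construction, the fiber $\pi^{-1}(M)$ is precisely $\gre M$, hence $\chi(\pi^{-1}(M))=\mu_e(M)$. Since $\zc$ is closed, $\mathbf{1}_{\zc}$ is a constructible function on the total space, and Proposition~\ref{prop: Dim} applied to the morphism $\pi$ yields
$$\mu_e(M)=\chi\bigl(\pi^{-1}(M)\cap\zc\bigr)=CF(\pi)(\mathbf{1}_{\zc})(M),$$
so that $\mu_e=CF(\pi)(\mathbf{1}_{\zc})$ is a constructible function on $\rep_d(Q_T,I)$.

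The only point that requires a bit of care is checking that $\zc$ is actually closed in $\rep_d(Q_T,I)\times\prod_{i}\gr_{e_i}(k^{d_i})$; this is the main (mild) obstacle. The standard trick is to cover $\prod_i\gr_{e_i}(k^{d_i})$ by its standard affine charts in which each $N_i$ is presented as the graph of a linear map, and to observe that on each chart, the incidence condition $M_\alpha(N_i)\subseteq N_j$ becomes a system of polynomial equations in the matrix coefficients of $M_\alpha$ and of these graph maps; patching these local closed conditions together gives that $\zc$ is closed. Once that is in hand, the conclusion is immediate from Proposition~\ref{prop: Dim}.
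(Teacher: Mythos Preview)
Your proof is correct and follows essentially the same approach as the paper: both form the same incidence variety inside $\rep_d(Q_T,I)\times\prod_{i}\gr_{e_i}(k^{d_i})$ (the paper calls it $\gre(d)$) and conclude by applying Proposition~\ref{prop: Dim} to the first projection. The paper simply asserts closedness of this set without the local-chart justification you supply.
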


\begin{proof}
Let $\gre(d)$ be the closed subset of
$$\rep_d(Q_T,I)\times\prod_{i\in Q_0}\gr_{e_i}(k^{d_i})$$
formed by those pairs $(\rho,W)$ for which
the subspaces $W_i\subseteq k^{d_i}$, $i\in Q_0$,
form a subrepresentation.
Apply proposition~\ref{prop: Dim} to
the first projection
${f:\gre(d)\fl\rep_d(Q_T,I)}$
and remark that
$\mu_e = CF(f)(1_{\gre(d)})$.

\end{proof}

\begin{cor}\label{cor: chi constr}
Let $L$ and $M$ be objects in $\cat$,
and let $e\leq \vdim FL + \vdim FM$ be in $\ko$.
Assume that the cylinders over the morphisms
$L\fl \susp M$ are constructible.
Then the function
\begin{eqnarray*}
\ld_e : \; \clm & \gfl &  \zb \\
\eps & \longmapsto & \chi(\gre F\mte)
\end{eqnarray*}
is constructible.
\end{cor}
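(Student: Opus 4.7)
The plan is to factor $\ld_e$ as the composition of a constructible map followed by a constructible function, namely $\ld_e = \mu_e \circ \pi_2 \circ \vph_{L,M}$, where $\vph_{L,M}$ is the constructible lift of $\Phi_{L,M}$ supplied by the hypothesis, $\pi_2$ extracts the $B$-module sitting at the middle vertex of $\overrightarrow{A_4}$ from a representation of $B\overrightarrow{A_4}$, and $\mu_e$ is the constructible function provided by Lemma~\ref{lem: mue}.

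In more detail: by the hypothesis that the cylinders over the morphisms $L \fl \susp M$ are constructible with respect to $T$, one has a constructible map
$$\vph_{L,M} : \clm \gfl \coprod_{d \leq d_\text{max}} \rep_d(B\overrightarrow{A_4})$$
lifting $\Phi_{L,M}$. For each fixed $d = (d_1,d_2,d_3,d_4)$, projection onto the coordinates recording the $B$-module structure at vertex $2$ of $\overrightarrow{A_4}$ defines a morphism of algebraic varieties $\pi_2 : \rep_d(B\overrightarrow{A_4}) \gfl \rep_{d_2}(Q_T,I)$, and these assemble into a map on the disjoint union which is a morphism of varieties on each stratum. By the very definition of $\Phi_{L,M}$, the composite $\pi_2 \circ \vph_{L,M}$ sends a morphism $\eps$ to the $B$-module $F\mte = \cat(T,Y)$, viewed as a point of $\rep_{d_2}(Q_T,I)$. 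Lemma~\ref{lem: mue} then gives a constructible function $\mu_e : \rep_{d_2}(Q_T,I) \fl \zb$ (extended by $0$ on pieces where $e \not\leq d_2$, in accordance with $\gre = \emptyset$ in that case). Using the remark in section~\ref{ssection: def constr} that the composition of a constructible function with a constructible map is again constructible, we conclude that $\ld_e = \mu_e \circ \pi_2 \circ \vph_{L,M}$ is constructible.

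There is no genuine obstacle here: the constructibility hypothesis has been formulated precisely so as to allow this kind of factorization, and all the nontrivial work — cutting the source into strata, recovering the module $FY$ at the middle vertex from the quadruple of modules, computing Euler characteristics of Grassmannians fibrewise — has already been carried out in the preceding subsections. The only delicate point is bookkeeping with the disjoint-union structure of the target and with dimension vectors $d_2$ that may fail to dominate $e$, but this is resolved by the stratum-by-stratum treatment of constructible maps and by setting $\mu_e$ to $0$ on the relevant strata.
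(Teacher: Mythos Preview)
Your proof is correct and follows essentially the same approach as the paper: use the constructible lift $\vph_{L,M}$ from the hypothesis to obtain the middle-term module as a point of $\coprod_d\rep_d(Q_T,I)$, then apply Lemma~\ref{lem: mue} and the fact that a constructible function composed with a constructible map is constructible. The paper's proof is more terse and does not spell out the projection $\pi_2$ or the bookkeeping with strata where $e\not\leq d_2$, but the underlying argument is identical.
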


\begin{proof}
By our hypothesis, the map sending $\eps\in\clm$
to the image of its middle term in
$\coprod\rep_d(Q_T,I)/GL(d)$, where
the union is over the dimension vectors $d$
not greater than $\vdim FL + \vdim FM$,
lifts to a constructible map
from $\clm$ to $\coprod\rep_d(Q_T,I)$. The claim therefore
follows from lemma~\ref{lem: mue}.
\end{proof}

Let
$M\stackrel{i}{\gfl} Y \stackrel{p}{\gfl} L \stackrel{\eps}{\gfl} \susp M$
be a triangle in $\cat$,
and denote
by $g$ the class of $\ke Fi$ in the Grothendieck group $\ko$.

\begin{lem}\label{lem: coind mt}
We have:
$$
\coi Y = \coi (L\oplus M) - \sum_{i= 1}^n \langle S_i, g\rangle_a [P_i].
$$
\end{lem}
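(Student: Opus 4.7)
\emph{Proof plan.}
My plan is to construct an explicit $T$-coresolution of $Y$ by combining chosen $T$-coresolutions of $M$ and $L$ with the given triangle via the octahedral axiom, and then to read off $\coi Y$ in $K_0(\operatorname{add} T) = \kop$.

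First I would fix coresolutions $M \to T^0_M \to T^1_M \to \susp M$ and $L \to T^0_L \to T^1_L \to \susp L$ with all terms in $\operatorname{add} T$; these exist because $T$ is cluster tilting. By definition $\coi M = [T^0_M] - [T^1_M]$ and $\coi L = [T^0_L] - [T^1_L]$, so that $\coi(L\oplus M) = \coi L + \coi M$.

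Next I would apply the octahedral axiom iteratively to the triangle $M \stackrel{i}{\to} Y \stackrel{p}{\to} L \stackrel{\eps}{\to} \susp M$ together with the two coresolutions. A first octahedral applied to the composition $Y \to L \to T^0_L$ yields a triangle relating $Y$, $T^0_L$, and an extension of $T^1_L$ by $\susp M$; a second octahedral, using the map $M \to T^0_M$, decomposes the $\susp M$-piece and produces a triangle
$$Y \to T^0_M \oplus T^0_L \to W \to \susp Y$$
where $W$ is filtered by terms in $\operatorname{add} T$ plus an obstruction term reflecting the nontriviality of $\eps$. Further octahedral moves resolve the obstruction asymmetrically by means of a correction summand $T^* \in \operatorname{add} T$ and yield an honest $\operatorname{add} T$-coresolution of $Y$; reading off the coindex from it gives $\coi Y = \coi L + \coi M - [T^*]$ in $K_0(\operatorname{add} T)$.

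The final step is to identify $[T^*]$ with $\sum_i \langle S_i, g\rangle_a [P_i]$. Applying $F = \cat(T,-)$ to the octahedral diagrams and using the long exact sequence
$$
\cdots \to F\susm L \to FM \stackrel{Fi}{\to} FY \stackrel{Fp}{\to} FL \stackrel{F\eps}{\to} F\susp M \to \cdots
$$
associated with the triangle, $[T^*]$ can be expressed in $\ko$ in terms of $g = [\ke Fi]$ and the kernels of $F$ applied to shifted connecting morphisms. The 2-Calabi--Yau duality $D\cat(X,Y) \cong \cat(Y, \susp^2 X)$ identifies these dualized kernels with Hom-duals of the relevant data, after which the standard relation between the antisymmetric Euler form $\langle-,-\rangle_a$ and minimal projective presentations of $B$-modules in the 2-CY tilted setting, as used in~\cite{Pcc}, yields the required equality. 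I expect the main obstacle to be precisely this last step: the octahedral bookkeeping mechanically produces a correction, but matching it with the specific expression $\sum_i \langle S_i, g\rangle_a [P_i]$ demands delicate use of 2-Calabi--Yau duality, which is where the special structure of 2-CY tilted algebras enters essentially.
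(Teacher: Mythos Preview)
Your plan is a genuine attempt, but it diverges from the paper's proof and contains a structural gap.

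The paper's argument is very short and purely by citation: it lifts $\ke Fi$ to an object $K\in\cat$ with $FK\simeq\ke Fi$, and then chains four results from~\cite{Pcc}. Proposition~2.2 there gives
\[
\coi Y = \coi L + \coi M - \coi K - \coi\susp K;
\]
Lemma~2.1(2) converts $-\coi\susp K$ into $\ind K$; Lemma~7 (the index-minus-coindex formula) gives $\ind K - \coi K = -\sum_i\langle S_i, FK\rangle_a[P_i]$; and section~3 identifies $[FK]$ with $g$. No octahedra are manipulated in this paper; all of that work has been packaged into~\cite{Pcc}.

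Your route is in effect a reproof of those cited results from scratch, and the gap is in the middle step. Splicing the $\operatorname{add} T$-coresolutions of $L$ and $M$ along the triangle via octahedra does \emph{not} produce a coresolution of $Y$ with a single correction summand $T^*\in\operatorname{add} T$. What actually falls out is an obstruction object of $\cat$, not of $\operatorname{add} T$: concretely it is built from $K$ and $\susp K$ where $K$ lifts $\ke Fi$, and to obtain a genuine $\operatorname{add} T$-coresolution of $Y$ you must in turn coresolve that obstruction. The contributions this introduces are exactly $-\coi K - \coi\susp K$, i.e.\ you land on the first displayed formula above rather than on a single class $[T^*]$. Your ``final step'' then amounts precisely to proving the index-minus-coindex identity $\ind K - \coi K = -\sum_i\langle S_i, FK\rangle_a[P_i]$, which is where the 2-Calabi--Yau duality enters; this is Lemma~7 of~\cite{Pcc}, and once you isolate it you have reconstructed the paper's proof. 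So the plan is salvageable, but only after you replace the non-existent $T^*\in\operatorname{add} T$ by the lift $K$ of the kernel and recognize the two separate ingredients (additivity up to $K,\susp K$; index-minus-coindex for $K$) that the paper simply quotes.
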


\begin{proof}
Let $K\in\cat$ lift $\ke Fi$.
Using respectively proposition~2.2, lemma~2.1.(2), lemma~7 and section~3
of \cite{Pcc}, we have the following equalities:
\begin{eqnarray*}
 \coi Y & = & \coi L + \coi M - \coi K - \coi \susp K \\
 & = & \coi (L\oplus M) + \ind K - \coi K \\
 & = & \coi (L\oplus M) - \sum_{i= 1}^n \langle S_i, FK\rangle_a [P_i] \\
 & = & \coi (L\oplus M) - \sum_{i= 1}^n \langle S_i, g\rangle_a [P_i].
\end{eqnarray*}
\end{proof}

\begin{cor}\label{cor: coind constr}
Let $L$ and $M$ be two objects such that
the cylinders over the morphisms $L\fl\susp M$
are constructible.
The map $\ld : \clm \gfl \kop$
which sends $\eps$ to the coindex
(or to the index) of
its middle term $Y$ is constructible.
\end{cor}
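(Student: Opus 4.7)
The plan is to reduce, via Lemma~\ref{lem: coind mt}, to showing that the class $g(\eps):=[\ke Fi]\in\ko$ depends constructibly on $\eps\in\clm$, and then to extract this class from the constructible data $\vph_{L,M}(\eps)$ provided by the hypothesis, using the kernel half of Lemma~\ref{lem: coker}.

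First I would apply Lemma~\ref{lem: coind mt} to write
$$
\coi\mte \;=\; \coi(L\oplus M)\;-\;\sum_{i=1}^n\langle S_i,g(\eps)\rangle_a[P_i],
$$
where $i:M\fl \mte$ is the first map of the defining triangle. The term $\coi(L\oplus M)$ is a constant independent of $\eps$, and $g\mapsto\sum_i\langle S_i,g\rangle_a[P_i]$ is $\zb$-linear and hence constructible, so it suffices to show that $g:\clm\fl\ko$ is constructible. Next I would use the constructibility hypothesis to decompose $\clm$ into finitely many locally closed subsets on each of which $\vph_{L,M}$ is a morphism of algebraic varieties with image in a single component $\rep_d(B\overrightarrow{A_4})$. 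On such a piece, projection to the first arrow is an algebraic map to $\rep_{(d_1,d_2)}(B\overrightarrow{A_2})$, that is, a parametrised family of $B$-linear maps $Fi:FM\fl (k^{d_2},\rho)$. Composing with the kernel version of Lemma~\ref{lem: coker} yields a constructible map to $\coprod_{d'\leq d_1}\rep_{(d',d_1)}(B\overrightarrow{A_2})$, and reading off the dimensions at each vertex of $Q_T$ (via the constructible functions $\rho\mapsto\operatorname{rank}\rho(e_j)$, which are upper semi-continuous) recovers $g(\eps)\in\ko$.

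The index variant would follow by the same strategy, using the identity relating $\ind$ and $\coi$ recalled in~\cite{Pcc} together with the fact that $\dim FY$ is locally constant on each of the above pieces (being recorded by the index $d_2$ of the component in which $\vph_{L,M}(\eps)$ lies). The main technical obstacle I anticipate is the transition from Lemma~\ref{lem: coker}, stated for $\homb(L,M)$ with both $L$ and $M$ fixed, to the present parametrised setting in which the $B$-module structure on the codomain of $Fi$ varies with $\eps$; the reason for optimism is that the kernel, being a subspace of the \emph{fixed} module $FM$, inherits its $B$-module structure from $FM$ and is therefore unaffected by the variation of the codomain, so the proof of Lemma~\ref{lem: coker} should go through essentially unchanged once the codomain structure $\rho$ is added as a parameter.
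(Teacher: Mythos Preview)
Your proposal is correct and follows essentially the same route as the paper: reduce via Lemma~\ref{lem: coind mt} to the constructibility of $\eps\mapsto[\ke Fi]$, then combine the constructible-cones hypothesis with the kernel half of Lemma~\ref{lem: coker} to extract this class, and finally observe that passing to the dimension vector (and then to $\sum_i\langle S_i,g\rangle_a[P_i]$) is constructible. The paper's version is terser---it does not spell out the decomposition into locally closed pieces or the parametrised-codomain issue you flag---but the underlying argument is the same; your remark that the kernel inherits its $B$-structure from the fixed module $FM$ is exactly why the extension of Lemma~\ref{lem: coker} to this setting goes through.
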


\begin{proof}
Note that $g$ is at most the sum of the
dimension vectors of $FL$ and $FM$, so that
by lemma~\ref{lem: coind mt} the map $\ld$
takes a finite number of values.
By our hypothesis and
lemma~\ref{lem: coker}, there exists a constructible map:
$$
\clm \gfl \coprod_{d\leq \dim FM} \rep_B'(k^d)
$$
which lifts the map sending $\eps$ to the isomorphism class
of the structure of $B$-module
on $\ke Fi$. Moreover, the map sending
a module $\rho$ in $\bigcup_{d\leq \dim FM} \rep_B'(k^d)$
to $\sum_{i= 1}^n \langle S_i, \rho\rangle_a [P_i]$ in $\kop$
only depends on the dimension vector of $\rho$ and
thus is constructible. Therefore, the map
$\ld$ is constructible.
\end{proof}

\begin{prop}\label{prop: clmy constr}
Let $L,M\in\cat$ be such that
the cylinders over the morphisms $L\fl\susp M$
are constructible.
Then the sets $\clmy$ are constructible subsets of $\clm$.
Moreover, the set $\clm$ is a finite disjoint
union of such constructible subsets.
\end{prop}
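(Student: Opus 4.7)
The plan is to exhibit $\clmy$ as a fiber of a constructible map with finite image, and to deduce the disjoint union decomposition from the finiteness of the image.

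First I would restrict attention to the dimension vectors $e$ that actually matter. Given any triangle
$$M \gfl Y \gfl L \stackrel{\eps}{\gfl} \susp M,$$
applying $F = \cat(T,-)$ yields an exact sequence of $B$-modules whose middle term $FY$ satisfies $\vdim FY \leq \vdim FL + \vdim FM$. Consequently $\chi(\gre FY) = 0$ whenever $e$ does not satisfy this coordinatewise bound. Hence the defining condition of $\class$ only needs to be tested on the finite set
$$E := \{e \in \ko \mid 0 \leq e \leq \vdim FL + \vdim FM\}.$$

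Next I would assemble the constructible invariants. By Corollary~\ref{cor: coind constr}, the coindex map $\ld : \clm \gfl \kop$, $\eps \mapsto \coi \mte$, is constructible and takes only finitely many values. For each $e \in E$, Corollary~\ref{cor: chi constr} states that $\ld_e : \clm \gfl \zb$, $\eps \mapsto \chi(\gre F\mte)$, is constructible and therefore also takes only finitely many values. Form the joint map
$$\Lambda : \clm \gfl \kop \times \zb^{E}, \qquad \eps \longmapsto \bigl(\ld(\eps), (\ld_e(\eps))_{e \in E}\bigr).$$
Its image is finite, say $\Lambda(\clm) = \{v_1,\ldots,v_N\}$, and each fiber $\Lambda^{-1}(v_j)$ is a finite intersection of fibers of constructible functions, hence a constructible subset of $\clm$.

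By the preceding reduction, for any $\eps,\eps' \in \clm$ one has $\mte' \in \langle \mte \rangle$ if and only if $\Lambda(\eps) = \Lambda(\eps')$. Thus each set $\clmy$ is exactly one of the fibers $\Lambda^{-1}(v_j)$ and is therefore constructible. Finally,
$$\clm = \coprod_{j=1}^{N} \Lambda^{-1}(v_j)$$
is a finite disjoint union of such sets $\clmy$, which is the second assertion. The only subtle point is the reduction to the finite index set $E$; everything else is a direct packaging of the constructibility results already established in Corollaries~\ref{cor: chi constr} and~\ref{cor: coind constr}.
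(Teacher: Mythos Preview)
Your proof is correct and follows essentially the same approach as the paper: both characterize $\clmy$ as the common fiber of the coindex function $\ld$ and the finitely many Euler-characteristic functions $\ld_e$, invoking Corollaries~\ref{cor: chi constr} and~\ref{cor: coind constr}. Your uniform bound $e \leq \vdim FL + \vdim FM$ is in fact slightly cleaner than the paper's bound $e \leq \vdim FY$, but the argument is otherwise identical.
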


\begin{proof}
Fix a triangle
$M\stackrel{i}{\gfl} Y \stackrel{p}{\gfl} L \stackrel{\eps}{\gfl} \susp M$
in $\cat$. Then $\eps'\in\clm$ is in $\clmy$ if and only if
\begin{itemize}
 \item $\ld(\eps')=\ld(\eps)$ and
 \item For all $e\leq \vdim FY$, $\ld_e(\eps')=\ld_e(\eps)$.
\end{itemize}
Therefore, the claim follows from corollary~\ref{cor: chi constr}
and corollary~\ref{cor: coind constr}.

\end{proof}

\subsection{Stable categories have constructible cones}\label{ssection: H stable}

In this section, we assume moreover that $\cat$ is
the stable category of a
Hom-finite, Frobenius, Krull--Schmidt category $\ec$,
which is
linear over the algebraically closed field $k$.
Our aim is to prove that such a category has constructible cones.

Let $\pc$ denote the ideal in $\ec$ of morphisms
factoring through a projective-injective object.
Let $T$, $L$ and $M$ be objects of the
category $\cat$.
Fix a $k$-linear section $s$ of the projection
$\xymatrix@-.4pc{\elm \dre & \clm}$ induced
by the canonical functor $\ec \stackrel{\Pi}{\gfl} \cat$.
Fix a conflation
$\xymatrix@-.4pc{M\;\, \dri & IM\dre &\susp M}$ in $\ec$,
with $IM$ being projective-injective in $\ec$,
and, for any $\eps$ in $\clm$,
consider its pull-back via $s\eps$:
$$
\xymatrix{
M\; \baseg \dri^\iota & Y \dre^\pi \bas & L \bas^{s\eps} \\
M\; \dri & IM\dre & \susp M.
}
$$
Via $\Pi$, this diagram induces a triangle
$
M\stackrel{i}{\gfl} Y \stackrel{p}{\gfl} L \stackrel{\eps}{\gfl} \susp M
$
in $\cat$.

For any $X\in\ec$, we have a
commutative diagram with exact rows:
$$
\xymatrix{
0 \dr & \ec(X,M)\dr^{\ec(X,\iota)}\baseg & \ec(X,Y)\dr^{\ec(X,\pi)} \bas &
\ec(X,L)\bas^{\ec(X,s\eps)}  \\
0\dr & \ec(X,M)\dr & \ec(X,IM)\dr & \ec(X,\susp M) .
}
$$

Fix $X'\in\ec$ and a morphism $X'\fl X$.
Denote by $C$ the endomorphism algebra
of $X'\fl X$ in the category of morphisms of $\ec$, and by
$\dc'$ the set of dimension vectors
$d=(d_1,d_2,d_3,d_4)$ such that
$d_1=\dim \ec(X,M)$, $d_3=\dim \ec(X,L)$,
$d_2\leq d_1 + d_3$ and $d_4 = \dim\ec(X,\susp M)$.

\begin{lem}\label{lem: mu constr}
There exists a constructible map
$$\mu : \clm \gfl \coprod_{d\in\dc'}\rep_d C\overrightarrow{A_4}$$
which lifts the map sending $\eps$ to the orbit of the
matrix representation of $\overrightarrow{A_4}$ in $\operatorname{mod}C$
given by
$\xymatrix{\ec(X,M) \dr^{\ec(X,\iota)} & \ec(X,Y) \dr^{\ec(X,\pi)} &
\ec(X,L) \dr^{\ec(X,s\eps)\;\;} & \ec(X,\susp M)}$.
\end{lem}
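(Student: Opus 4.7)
The plan is to realise the four-term representation as the output of a pullback construction whose only $\eps$-dependence is linear, and then to invoke the ``kernel'' half of lemma~\ref{lem: coker}. Three of the four vertices ($\ec(X,M)$, $\ec(X,L)$, $\ec(X,\susp M)$) and the map $\ec(X,q):\ec(X,IM)\fl\ec(X,\susp M)$ induced by the chosen fixed conflation do not depend on $\eps$ at all; and $\ec(X,s\eps):\ec(X,L)\fl\ec(X,\susp M)$ is $k$-linear in $\eps$ because $s$ is a $k$-linear section. All maps in sight are $C$-linear because both $X$ and $X'\fl X$ act by pre-composition.

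First, since $\ec(X,-)$ is left exact, it takes the cartesian square defining $Y$ to a pullback square of $C$-modules. Hence $\ec(X,Y)=\ke(\psi_\eps)$, where
$$
\psi_\eps : \ec(X,L) \oplus \ec(X,IM) \gfl \ec(X,\susp M), \qquad (a,b) \longmapsto \ec(X,s\eps)(a) - \ec(X,q)(b).
$$
The assignment $\eps \longmapsto \psi_\eps$ is $k$-linear, hence a morphism of algebraic varieties, from $\clm$ to the finite-dimensional vector space $\homph_C\big(\ec(X,L)\oplus\ec(X,IM),\,\ec(X,\susp M)\big)$. Composing with the constructible kernel-lift provided by the second assertion of lemma~\ref{lem: coker} yields a constructible map
$$
\clm \gfl \coprod \rep_{(d_2,\, d_3 + \dim \ec(X,IM))}(C\overrightarrow{A_2})
$$
that lifts $\eps \longmapsto \big[\ec(X,Y) \infl \ec(X,L) \oplus \ec(X,IM)\big]$. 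The bound $d_2\le d_1+d_3$ required by the index set $\dc'$ is automatic from the left-exact sequence $0\fl\ec(X,M)\fl\ec(X,Y)\fl\ec(X,L)$.

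Next, I would extract the three arrows of the $A_4$-representation from this inclusion. Post-composing the kernel inclusion with the two canonical projections of $\ec(X,L)\oplus\ec(X,IM)$ gives $\ec(X,\pi)$ and $\ec(X,\sigma)$ constructibly, and $\ec(X,s\eps)$ is already linear in $\eps$. The remaining map $\ec(X,\iota)$ is recovered as the unique $C$-linear factorization through the kernel inclusion of the fixed morphism $(0,\ec(X,j)):\ec(X,M)\fl\ec(X,L)\oplus\ec(X,IM)$, where $j:M\infl IM$ is the fixed inflation; the factorization exists because $\pi\iota=0$ and $\sigma\iota=j$, and on each locally closed stratum where the kernel inclusion has a prescribed matrix presentation it is given by standard matrix operations, hence is constructible in $\eps$. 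Assembling the four $C$-modules and three arrows then produces the desired lift $\mu$.

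The main obstacle I expect is not conceptual but technical: verifying that each of the three steps (taking the kernel of $\psi_\eps$, post-composing with the two projections, and computing the factorization $\ec(X,\iota)$) preserves the $C$-module structure and not merely the underlying $k$-vector-space structure, and that on each locally closed piece produced by lemma~\ref{lem: coker} all of these operations are genuine morphisms of algebraic varieties. Given the naturality of the pullback and the $C$-linearity of all the maps involved, this bookkeeping goes through, but it has to be spelled out carefully.
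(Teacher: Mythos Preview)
Your proof is correct and follows essentially the same line as the paper: realise $\ec(X,Y)$ as the kernel of the map $\ec(X,L)\oplus\ec(X,IM)\to\ec(X,\Sigma M)$ coming from the pullback square, observe that this map depends linearly on $\eps$, and invoke the kernel half of lemma~\ref{lem: coker}. The one place where you work harder than necessary is the recovery of $\ec(X,\iota)$: rather than computing a factorisation of the fixed map $(0,\ec(X,j))$ through the varying kernel inclusion, the paper simply notes that since $M\stackrel{\iota}{\infl}Y\stackrel{\pi}{\defl}L$ is a conflation and $\ec(X,-)$ is left exact, $\ec(X,\iota)$ is itself the kernel of $\ec(X,\pi)$, so a second application of lemma~\ref{lem: coker} produces it directly.
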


\begin{proof}
By definition of a pull-back,
the map $\ec(X,Y) \gfl \ec(X,IM)\oplus\ec(X,L)$
is a kernel for the map $\ec(X,IM)\oplus\ec(X,L) \gfl \ec(X,\susp M)$,
with appropriate signs.
Moreover, the morphism $\xymatrix{\ec(X,M)\dr^{\ec(X,\iota)}&\ec(X,Y)}$
is a kernel for $\ec(X,\pi)$.
Therefore, lemma~\ref{lem: coker} in section~\ref{ssection: coker}
applies and such a constructible map $\mu$ exists.
\end{proof}

Denote by $\dc$ the set of dimension vectors
$d=(d_1,d_2,d_3,d_4)$ such that:

\noindent $d_1 = \dim\cat(T,M)$, $d_3 = \dim\cat(T,L)$,
$d_2\leq d_1 + d_3$ and $d_4 = \dim\cat(T,\susp M)$.

\begin{prop}\label{prop: phi constr}
There exists a constructible map
$$
\vph : \clm \gfl \coprod_{d\in\dc}\rep_d B\overrightarrow{A_4}
$$
which lifts the map sending $\eps$ to the orbit
of the representation
$$\xymatrix@-.1pc{\cat(T,M) \dr^{Fi} & \cat(T,Y) \dr^{Fp} &
\cat(T,L) \dr^{F\eps} & \cat(T,\susp M)}.$$
\end{prop}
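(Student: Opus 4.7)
The plan is to reduce Proposition~\ref{prop: phi constr} to Lemma~\ref{lem: mu constr} via an appropriate choice of object in $\ec$ from which both $\ec(T,-)$ and the subfunctor $\pc(T,-)$ can be read off constructibly. Since $\ec$ is Frobenius, fix an inflation $\iota : T \hookrightarrow I_T$ with $I_T$ projective-injective. As projective-injective objects are injective and $\iota$ is an inflation, any morphism $T \to Z$ that factors through a projective-injective factors through $\iota$; hence
\[
\cat(T, Z) \;=\; \coke\bigl(\iota^{*} : \ec(I_T, Z) \gfl \ec(T, Z)\bigr)
\]
for every $Z \in \ec$, where $\iota^{*} = (-)\circ \iota$. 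The target of $\vph$ is therefore obtained from the 4-term exact sequence of $\ec(T, -)$ applied to $M \to Y \to L \to \susp M$ by factoring out the image of $\iota^{*}$ term by term.

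To handle $\ec(T, -)$ and $\ec(I_T, -)$ constructibly in a single step, apply Lemma~\ref{lem: mu constr} with $X = T \oplus I_T$ and $X' = 0$ (with the zero morphism $X' \gfl X$). Then $C = \End_\ec(T \oplus I_T)$, and the lemma yields a constructible map $\mu$ lifting, for each $\eps$, the representation
\[
\ec(T \oplus I_T, M) \gfl \ec(T \oplus I_T, Y) \gfl \ec(T \oplus I_T, L) \gfl \ec(T \oplus I_T, \susp M)
\]
of $\overrightarrow{A_4}$ in $\operatorname{mod}C$. The orthogonal idempotents $e_T, e_{I_T} \in C$ cut out the summands $\ec(T, Z)$ and $\ec(I_T, Z)$ of $\ec(T \oplus I_T, Z)$, and the element $\tilde\iota \in C$ obtained by extending $\iota$ by zero on the $I_T$-summand acts on $\ec(T \oplus I_T, Z) = \ec(T, Z) \oplus \ec(I_T, Z)$ by $(f, g) \mapsto (g\iota, 0)$, so that its $\ec(T, Z)$-component recovers $\iota^{*}(g)$.

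These extractions are polynomial in the matrix entries produced by $\mu$, hence constructible. Then apply Lemma~\ref{lem: coker} four times, once for each term of the sequence, to produce constructibly the cokernel $\cat(T, Z) = \ec(T, Z)/\im(\iota^{*})$, together with the induced maps between them. The $B$-module structure on each $\cat(T, Z)$ is inherited from the $\End_\ec(T)$-action on $\ec(T, Z)$ (which is part of the data, via the subalgebra inclusion $\End_\ec(T) \subset C$), and this action factors through $B = \End_\ec(T)/\pc(T, T)$ on the cokernel. Stratifying by the resulting dimension vector in $\dc$ yields the desired constructible map $\vph : \clm \gfl \coprod_{d \in \dc}\rep_d B \overrightarrow{A_4}$.

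The main technical point is the second paragraph: one must check that the extraction of the summands and of $\iota^{*}$ from the $C$-module data provided by Lemma~\ref{lem: mu constr} is realized by morphisms of algebraic varieties on each stratum. This is immediate from the explicit formula for the $\End_\ec(T \oplus I_T)$-action on $\ec(T \oplus I_T, -)$ by precomposition; what makes the argument work is precisely the identification $\pc(T, -) = \im(\iota^{*})$, which converts the a priori global condition ``factors through some projective-injective'' into a single precomposition map to which Lemma~\ref{lem: coker} applies.
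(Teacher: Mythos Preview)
Your argument is correct and follows the same strategy as the paper: identify $\cat(T,-)$ with $\coke\bigl(\iota^{*}:\ec(I_T,-)\to\ec(T,-)\bigr)$, obtain the four-term $\ec$-level sequence constructibly via Lemma~\ref{lem: mu constr}, then pass to cokernels via Lemma~\ref{lem: coker}. The packaging differs slightly. The paper encodes the arrow $\ec(I_T,-)\to\ec(T,-)$ as a module over $\widetilde B=\End(T\hookrightarrow I_T)$ in the Frobenius category of inflations and then applies Lemma~\ref{lem: coker} once over $\widetilde B\otimes kA_4$, so that the induced maps between the four cokernels are part of the output; you instead use $\End_\ec(T\oplus I_T)$ and recover the two summands and $\iota^{*}$ via the idempotents and the off-diagonal element, which is an equivalent and perfectly transparent encoding. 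The only point to tighten is that four separate applications of Lemma~\ref{lem: coker} yield the cokernel objects together with the projections, but not directly the induced maps $\cat(T,M)\to\cat(T,Y)\to\cat(T,L)\to\cat(T,\susp M)$; this is easily remedied either by observing that those maps are given by polynomial formulas in the data already produced, or by applying Lemma~\ref{lem: coker} once over $\End_\ec(T)\otimes kA_4$ as the paper does.
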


\begin{proof}
Let $T\infl IT$ be an inflation from $T$ to a projective-injective
object in $\ec$.
This inflation induces a commutative diagram $(\ast)$
of modules over the endomorphism algebra $\widetilde{B}$ of
$T\infl IT$ in the Frobenius category of inflations of $\ec$:
$$
\xymatrix{
\ec(IT,M) \dr\bas_{(\ast)\;\;\;\;\;\;\;\;\;\;\;\;\;\;\;\;}
& \ec(IT,Y)\dr\bas & \ec(IT,L)\dr\bas & \ec(IT,\susp M) \bas \\
\ec(T,M) \dr & \ec(T,Y)\dr & \ec(T,L)\dr & \ec(T,\susp M).
}
$$
The map which sends
$\eps$ to the orbit of the diagram $(\ast)$ lifts to a constructible one.
This is proved by repeating the proof of lemma~\ref{lem: mu constr}
for the functor
$$
\ec\gfl \operatorname{mod}\widetilde{B},\;
U\longmapsto \big(\ec(IT,U)\fl\ec(T,U)\big)
$$
instead of $U\mapsto\ec(X,U)$
and using lemma~\ref{lem: coker} for $\widetilde{B}$.

By applying lemma~\ref{lem: coker} to
$\widetilde{B}\otimes kA_4$, we see that the vertical
cokernel of diagram $(\ast)$ is constructible as a
$\widetilde{B}\otimes kA_4$-module.
Now the claim follows because the terms of the
cokernel are $B$-modules and $B$ is also
the stable endomorphism algebra of
$T\infl IT$ in the Frobenius category of inflations of $\ec$.
\end{proof}

\subsection{Generalized cluster categories have constructible cones}\label{ssection: H cqw}

Let $(Q,W)$ be a Jacobi-finite quiver with potential $W$ in $kQ$
(cf. section~3.3 of~\cite{Acqw}),
and let $\Gamma$ be the Ginzburg dg algebra associated with $(Q,W)$
(cf. section~4.2 of~\cite{Ginzburg}).
The perfect derived category $\perg$ is the thick subcategory
of the derived category $\dc\Gamma$ generated by $\Gamma$.
The finite dimensional derived category
$\dc_\text{fd}\Gamma$
is the full subcategory of $\dc\Gamma$ whose objects are
the dg modules whose homology is of finite total dimension.
An object $M$ belongs to $\dc_\text{fd}\Gamma$
if and only if
$\homph_{\dc\Gamma}(P,M)$ is finite dimensional for each
object $P$ of $\perg$.

\vspace{3mm}
\noindent {\bf Lemma} [Keller--Yang]

\begin{itemize}
 \item[a)] The category $\dc_\text{fd}\Gamma$ is contained in $\perg$.
 \item[b)] An object of $\dc\Gamma$ belongs to $\dc_\text{fd}\Gamma$
if and only if it is quasi-isomorphic to a dg $\Gamma$-module
of finite total dimension.
 \item[c)] The category $\dc_\text{fd}\Gamma$ is equivalent to
the localization of the homotopy category $\hc_\text{fd}\Gamma$
of right dg $\Gamma$-modules of finite total dimension
with respect to its subcategory of acyclic dg modules.
\end{itemize}
\vspace{3mm}

Note that the previous lemma,
taken from the appendix of~\cite{KY}, is stated above under
some restrictions which do not appear there.

Recall that the generalized cluster category associated with $(Q,W)$,
defined in~\cite{Acqw},
is the localization of the category $\operatorname{per}\Gamma$ by
the full subcategory $\dc_\text{fd}\Gamma$.
It is proved in~\cite{Acqw} that the canonical
t-structure on $\dc\Gamma$ restricts to a t-structure
on $\perg$. We will denote this t-structure
by $(\per^{\leq 0},\per^{\geq 0})$.
Denote by $\fc$ the full
subcategory of $\perg$ defined by:
$$
\fc = \per^{\leq 0}\cap\,^\perp(\per^{\leq -2}).
$$
Recall from \cite{Acqw} that the canonical functor
from $\perg$ to $\cat_\Gamma$ induces a $k$-linear equivalence
from $\fc$ to $\cat_\Gamma$ and that the functor
$\tau_{\leq -1}$ induces an equivalence from
$\fc$ to $\susp\fc$.

Fix an object $T$ in $\cat_\Gamma$. Without loss
of generality, assume that $T$ belongs to $\fc$.
Note that the canonical cluster tilting object
$\Gamma\in\cat_\Gamma$ does belong to $\fc$.

\begin{lem}\label{lem: perg}
Let $X$ be an object of $\perg$. If $X$
is left orthogonal to $\per^{\leq -3}$,
which happens for instance when $X$ is in $\fc$ or in $\susp\fc$,
then there is a functorial isomorphism
$$
\homph_{\perg}(\tau_{\leq -1}T,X)\stackrel{\simeq}{\gfl}
\cat_\Gamma(T,X).
$$
\end{lem}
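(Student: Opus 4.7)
The plan is to identify $T$ with $\tau_{\leq -1}T$ inside $\cat_\Gamma$ and then to prove that the Verdier localization $\perg \to \cat_\Gamma$ is fully faithful on morphisms from $\tau_{\leq -1}T$ to $X$. The argument combines the canonical t-structure on $\perg$ with the $3$-Calabi--Yau property of the Ginzburg dg algebra $\Gamma$.

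First, since $T\in\fc\subset\per^{\leq 0}$, the canonical truncation triangle $\tau_{\leq -1}T\to T\to H^0 T\to\susp\tau_{\leq -1}T$ has third term $H^0T$ lying in the heart of the t-structure; by the Jacobi-finiteness of $(Q,W)$ this heart consists of finite-dimensional modules over the Jacobian algebra, so $H^0T\in\dc_{\text{fd}}\Gamma$. The morphism $\tau_{\leq -1}T\to T$ therefore becomes invertible in $\cat_\Gamma$, yielding a natural isomorphism $\cat_\Gamma(T,X)\simeq\cat_\Gamma(\tau_{\leq -1}T,X)$. The lemma is thus reduced to showing that the canonical map $\homph_{\perg}(\tau_{\leq -1}T,X)\to\cat_\Gamma(\tau_{\leq -1}T,X)$ is bijective.

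For this I use the right calculus of fractions: an element of $\cat_\Gamma(\tau_{\leq -1}T,X)$ is represented by a roof $\tau_{\leq -1}T\stackrel{s}{\leftarrow} Y\stackrel{f}{\gfl} X$ with $\operatorname{cone}(s)\in\dc_{\text{fd}}\Gamma$. Replacing $Y$ by $\tau_{\leq -1}Y$, the composite $\tau_{\leq -1}Y\to Y\to T$ factors uniquely through $\tau_{\leq -1}T\to T$ by the universal property of the truncation, and two octahedra (combining the canonical truncation triangles for $Y$ and $T$ with the triangle of $s$) show that the cone $D'$ of the resulting map $\tau_{\leq -1}Y\to\tau_{\leq -1}T$ still belongs to $\dc_{\text{fd}}\Gamma$. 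Moreover, since $\per^{\leq -1}$ is closed under extensions, $D'$ also belongs to $\per^{\leq -1}$.

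The crux is to verify that the morphism $\tau_{\leq -1}Y\to Y\to X$ extends uniquely to a morphism $\tau_{\leq -1}T\to X$ in $\perg$. From the triangle $\tau_{\leq -1}Y\to\tau_{\leq -1}T\to D'\to\susp\tau_{\leq -1}Y$, the obstructions to existence and to uniqueness sit in $\homph_{\perg}(\susp^{-1}D',X)$ and $\homph_{\perg}(D',X)$ respectively. I dispatch both via the $3$-Calabi--Yau duality $\homph_{\perg}(N,X)\simeq D\homph_{\perg}(X,\susp^{3}N)$ available for $N\in\dc_{\text{fd}}\Gamma$. Applied with $N=D'$ and $N=\susp^{-1}D'$, the right-hand sides become $D\homph_{\perg}(X,\susp^{3}D')$ and $D\homph_{\perg}(X,\susp^{2}D')$; since $D'\in\per^{\leq -1}$ these shifts lie in $\per^{\leq -4}$ and $\per^{\leq -3}$ respectively, so both vanish by the orthogonality hypothesis $\homph_{\perg}(X,\per^{\leq -3})=0$. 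This gives surjectivity; the same vanishing $\homph_{\perg}(D',X)=0$, applied to a factorization $\tau_{\leq -1}T\to D'\to X$ of any morphism of $\perg$ which becomes zero in $\cat_\Gamma$, yields injectivity. The main delicate point is thus the interplay between the octahedral construction controlling $D'$ and the $3$-Calabi--Yau shift: the hypothesis $X\in{}^\perp\per^{\leq -3}$ is calibrated exactly to compensate for the shift by $3$ coming from the Calabi--Yau property of $\Gamma$.
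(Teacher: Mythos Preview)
Your argument is correct in outline and arrives at the result by a genuinely different route from the paper. The paper invokes Amiot's colimit description $\cat_\Gamma(T,X)=\underrightarrow{\lim}\,\homph_{\perg}(\tau_{\leq n}T,X)$ and shows directly that the system stabilises at $n=-1$: applying $\homph_{\perg}(-,X)$ to the triangle $\tau_{\leq n}T\to\tau_{\leq -1}T\to\tau_{[n+1,-1]}T$, the third term lies automatically in $\dc_{\text{fd}}\Gamma\cap\per^{\leq -1}$, and the $3$-Calabi--Yau duality together with $X\in{}^\perp\per^{\leq -3}$ kills both obstruction groups. You instead work with the calculus of fractions and truncate an arbitrary roof. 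Both proofs rest on exactly the same $3$-CY vanishing, but the paper's is shorter because the cofinal system $\{\tau_{\leq n}T\}$ is already adapted to the t-structure, whereas you must truncate each roof by hand and then control the new cone.

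Two points in your write-up need tightening. First, after passing to $\tau_{\leq -1}Y$ you need $D'\in\dc_{\text{fd}}\Gamma$; a single octahedron for the composition $\tau_{\leq -1}Y\to Y\stackrel{s}{\to}\tau_{\leq -1}T$ gives a triangle $\tau_{\geq 0}Y\to D'\to D$, so you must also observe that $\tau_{\geq 0}Y\in\dc_{\text{fd}}\Gamma$ (true because any $Y\in\perg$ has degreewise finite-dimensional cohomology vanishing in large degree). Second, and more substantively, your injectivity step is incomplete: a morphism $\tau_{\leq -1}T\to X$ that becomes zero in $\cat_\Gamma$ factors through some $N\in\dc_{\text{fd}}\Gamma$, but nothing forces $N\in\per^{\leq -1}$, so ``the same vanishing $\homph_{\perg}(D',X)=0$'' does not apply as stated. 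The repair is immediate: since $\tau_{\leq -1}T\in\per^{\leq -1}$ and $\homph_{\perg}(\per^{\leq -1},\per^{\geq 0})=0$, the map $\tau_{\leq -1}T\to N$ factors through $\tau_{\leq -1}N\in\dc_{\text{fd}}\Gamma\cap\per^{\leq -1}$, and then your $3$-CY argument does kill $\homph_{\perg}(\tau_{\leq -1}N,X)$.
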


\begin{proof}
Let $X\in\perg$ be left orthogonal to $\per^{\leq -3}$.
By~\cite[Proposition 2.8]{Acqw}, we have
$\cat_\Gamma(T,X) = \underrightarrow{\lim}
\homph_{\perg}(\tau_{\leq n}T,\tau_{\leq n}X).$
Moreover, for any $n$, we have
$$
\homph_{\perg}(\tau_{\leq n}T,\tau_{\leq n}X) = \homph_{\perg}(\tau_{\leq n}T,X).
$$

Let $n<-1$.
The object $\tau_{[n+1,-1]}T$ belongs to $\dc_\text{fd}(\Gamma)$
and $X$ belongs to $\perg$,
so that the $3$-Calabi--Yau property (see~\cite{KCY}) implies
that the morphism space $\homph_{\perg}(\susm\tau_{[n+1,-1]}T,X)$ is isomorphic
to the dual of $\homph_{\perg}(X,\susp^2\tau_{[n+1,-1]}T)$.
This latter vanishes since $X$ belongs to $^\perp(\per^{\leq -3})$.
The same argument shows that the space
$\homph_{\perg}(\tau_{[n+1,-1]}T,X)$ also vanishes.
Therefore applying the functor $\homph_{\perg}(?,X)$ to
the triangle
$$
\susm \tau_{[n+1,-1]}T \gfl \tau_{\leq n}T \gfl
\tau_{\leq -1}T \gfl \tau_{[n+1,-1]}T,
$$
yields an isomorphism
$\homph_{\perg}(\tau_{\leq n}T,X)\stackrel{\simeq}{\gfl}
\homph_{\perg}(\tau_{\leq -1}T,X)$.
%
\end{proof}

\begin{lem}\label{lem: fdg}
Let $X,Y\in\perg$ and assume that $X$ belongs to $^\perp(\per^{\leq-3})$.
Then the functor $\tau_{\geq-2}$ induces a bijection
$\homph_{\perg}(X,Y) \simeq \homph_{\dc_\text{\emph{fd}}(\Gamma)}
(\tau_{\geq-2}X,\tau_{\geq-2}Y).$
\end{lem}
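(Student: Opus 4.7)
The plan is to reduce the statement to the vanishing properties of the t-structure $(\per^{\leq 0},\per^{\geq 0})$ and the orthogonality hypothesis on $X$, by cutting both $X$ and $Y$ along the truncation triangle centered at degree $-2$.

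First, I would apply $\homph_{\perg}(X,-)$ to the triangle
$$\tau_{\leq -3}Y\gfl Y\gfl \tau_{\geq -2}Y\gfl \susp\tau_{\leq -3}Y.$$
The outer terms $\tau_{\leq -3}Y$ and $\susp\tau_{\leq -3}Y$ lie in $\per^{\leq -3}$ (since $\per^{\leq -4}\subseteq\per^{\leq -3}$), and $X$ is left orthogonal to $\per^{\leq -3}$ by hypothesis. Consequently the long exact sequence forces
$$\homph_{\perg}(X,Y)\stackrel{\simeq}{\gfl}\homph_{\perg}(X,\tau_{\geq -2}Y).$$

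Next, I would apply $\homph_{\perg}(-,\tau_{\geq -2}Y)$ to the analogous truncation triangle for $X$,
$$\tau_{\leq -3}X\gfl X\gfl \tau_{\geq -2}X\gfl \susp\tau_{\leq -3}X,$$
and invoke the general axiom of t-structures that $\homph(A,B)=0$ whenever $A\in\per^{\leq n}$ and $B\in\per^{\geq n+1}$. Applied with $n=-3$ to the pair $(\tau_{\leq -3}X,\tau_{\geq -2}Y)$, and with $n=-4$ to the pair $(\susp\tau_{\leq -3}X,\tau_{\geq -2}Y)$ (using $\per^{\geq -2}\subseteq\per^{\geq -3}$), both vanishings hold, and the long exact sequence yields
$$\homph_{\perg}(X,\tau_{\geq -2}Y)\stackrel{\simeq}{\gfl}\homph_{\perg}(\tau_{\geq -2}X,\tau_{\geq -2}Y).$$

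Finally, I would identify the last group with $\homph_{\dc_\text{fd}(\Gamma)}(\tau_{\geq -2}X,\tau_{\geq -2}Y)$. Both $\perg$ and $\dc_\text{fd}(\Gamma)$ are full subcategories of $\dc\Gamma$, so it suffices to observe that $\tau_{\geq -2}X$ and $\tau_{\geq -2}Y$ belong to both: perfectness of $X$ and $Y$ forces their cohomology to be bounded above and finite dimensional in each degree, and the cut at $-2$ makes the cohomology bounded below as well; hence both truncations have finite total dimension, i.e.\ lie in $\dc_\text{fd}(\Gamma)$, which in turn sits inside $\perg$ by part~a) of the preceding lemma.

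The main obstacle is really only the last identification step, where one must be mildly careful that the truncated objects indeed land in $\dc_\text{fd}(\Gamma)$; everything else is a formal consequence of the t-structure axioms and the assumed orthogonality. The composition of the three isomorphisms above is induced by $\tau_{\geq -2}$, so one also needs to check that the composite map is indeed the functorial one, but this is clear from the naturality of the truncation triangles.
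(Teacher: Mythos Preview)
Your proof is correct and follows essentially the same approach as the paper's own argument: first replace $Y$ by $\tau_{\geq -2}Y$ using the orthogonality hypothesis on $X$, then replace $X$ by $\tau_{\geq -2}X$ using the t-structure axioms, and finally observe that the truncated objects lie in $\dc_\text{fd}(\Gamma)$. The paper's proof is simply more terse, while you have spelled out the relevant truncation triangles and the precise vanishings explicitly.
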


\begin{proof}
By assumption, $X$ is left orthogonal to the subcategory
$\per^{\leq-3}$. Therefore,
the space $\homph_{\perg}(X,Y)$ is isomorphic to
$\homph_{\perg}(X,\tau_{\geq-2}Y)$, and thus to
$\homph_{\perg}(\tau_{\geq-2}X,\tau_{\geq-2}Y)$.
Since $X$ and $Y$ are perfect over $\Gamma$, their images
under $\tau_{\geq-2}$ are quasi-isomorphic
to dg modules of finite total dimension.
\end{proof}

\begin{prop}\label{prop: dfd H}
Let $\Gamma$ be the Ginzburg dg algebra associated with
a Jacobi-finite quiver. Then the category
$\dc_\text{fd}(\Gamma)$ has constructible cones.
\end{prop}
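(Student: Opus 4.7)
My plan is to adapt the proof of Proposition~\ref{prop: phi constr} (for stable categories of Frobenius categories) to the dg setting, replacing pull-backs in a Frobenius category by mapping cones of dg $\Gamma$-modules. First, using part (b) of the preceding lemma, I represent $L$, $M$, $T$ by dg $\Gamma$-modules of finite total dimension. Fix K-projective (cofibrant) replacements $\widetilde L \gfl L$ and $\widetilde T \gfl T$, so that $\homph_\Gamma(\widetilde L, X)$ and $\homph_\Gamma(\widetilde T, X)$ compute the derived $\operatorname{Hom}$ for $X$ of finite total dimension. In particular, $\clm = \homph_{\dc\Gamma}(L, \susp M)$ is identified with $H^1 \homph_\Gamma(\widetilde L, M)$, a finite-dimensional $k$-vector space, hence an algebraic variety. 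Fix a $k$-linear section $s : \clm \gfl Z^1 \homph_\Gamma(\widetilde L, M)$ of the canonical quotient, so that for every $\eps \in \clm$ one obtains a concrete cocycle $\widetilde\eps = s(\eps) : \widetilde L \gfl \susp M$ depending linearly on $\eps$.

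Next, form the mapping cone $Y_\eps$ as a dg $\Gamma$-module with underlying graded module $\susp M \oplus \widetilde L$ and differential given in matrix form with diagonal entries $d_{\susp M}$, $d_{\widetilde L}$ and off-diagonal entry $\widetilde\eps$; this differential is linear in $\eps$. The resulting strict, degreewise split short exact sequence of dg modules $0 \gfl M \gfl Y_\eps \gfl \widetilde L \gfl 0$ descends in $\dc\Gamma$ to a triangle $M \gfl Y \gfl L \stackrel{\eps}{\gfl} \susp M$ of the form required in the definition of $\Phi_{L, M}$. Applying $\homph_\Gamma(\widetilde T, -)$ yields a short exact sequence of complexes over $\End_\Gamma(\widetilde T)$ whose middle term has differential again of matrix form, with off-diagonal entry post-composition with $\widetilde\eps$, hence linear in $\eps$.

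Passing to $H^0$ of the outer complexes (which do not depend on $\eps$) and to $H^0$ of the middle one (which does), one recovers the four-term representation of $\overrightarrow{A_4}$ in $\modb$ featured in the definition of $\Phi_{L, M}$, together with its connecting homomorphism into $H^1 \homph_\Gamma(\widetilde T, M) = \cat(T, \susp M)$. Constructibility of $\vph_{L, M}$ then reduces to showing that the differentials of the complex $\homph_\Gamma(\widetilde T, Y_\eps)$, being linear in $\eps$, have kernels and images varying constructibly with $\eps$; this follows by iterated application of Lemma~\ref{lem: coker}, together with the observation that the $B$-module structure on $H^0 \homph_\Gamma(\widetilde T, Y_\eps)$ is induced from that on the ambient complex.

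The main obstacle is the possible infinite-dimensionality of the K-projective resolutions $\widetilde L$ and $\widetilde T$ over $\Gamma$: although $L$, $M$, $T$ have finite total dimension, $\widetilde L$ and $\widetilde T$ typically do not. To reduce to a finite-dimensional problem, I would use that $\dc_\text{fd}\Gamma$ is $3$-Calabi--Yau and that $\Gamma$ is Jacobi-finite, which forces the cohomology of $\homph_\Gamma(\widetilde T, Y_\eps)$ to be concentrated in a bounded range of degrees. After suitable truncation, the remaining constructibility claim becomes a finite sequence of applications of Lemma~\ref{lem: coker}.
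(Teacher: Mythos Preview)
Your overall strategy is natural, but the final ``truncation'' step hides a genuine gap. The obstacle you correctly identify is not merely that the cohomology of $\homph_\Gamma(\widetilde T, Y_\eps)$ might be spread over many degrees; it is that the \emph{terms} of this complex are infinite-dimensional $k$-vector spaces. Indeed, since $Q$ may have oriented cycles, $\Gamma^0 = kQ$ is infinite-dimensional, so even a perfect (finitely semi-free) resolution $\widetilde L$ has $\widetilde L e_i$ of infinite dimension, and hence so does $Y_\eps = M \oplus \widetilde L$ and each term of $\homph_\Gamma(\widetilde T, Y_\eps)$. Lemma~\ref{lem: coker} is stated and proved for morphisms between \emph{finite-dimensional} modules; boundedness of cohomology does nothing to bring you into its range of applicability. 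No amount of truncation in the cohomological direction makes the individual terms finite, and you give no mechanism for replacing $\widetilde T$ or $Y_\eps$ by something finite while preserving the relevant $H^0$ constructibly in $\eps$.

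The paper's proof circumvents cofibrant resolutions altogether. First, using that $\dc_\text{fd}(\Gamma)$ is a localization of the homotopy category $\hc_\text{fd}(\Gamma)$ of finite-dimensional dg modules, one replaces $M$ by a finite-dimensional $M'$ so that every $\eps \in \homph_{\dc_\text{fd}}(L,\susp M)$ is represented by an honest chain map $L \to \susp M'$; the cone of such a map is then genuinely finite-dimensional. Second, writing $\mathfrak{n}$ for the arrow ideal of $\Gamma$, one picks $m$ with $L\mathfrak{n}^m = M'\mathfrak{n}^m = 0$, so that every cone $Y$ also satisfies $Y\mathfrak{n}^m = 0$. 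Then $\homph_{\dc}(T,X) \cong \homph_{\hc_\text{fd}}(T',X)$ for all such $X$, where $T' = \mathbf{p}T/(\mathbf{p}T)\mathfrak{n}^m$ is the \emph{finite-dimensional} quotient of the cofibrant resolution of $T$. One is now entirely inside $\hc_\text{fd}(\Gamma)$, which is the stable category of a Hom-finite Frobenius category, and Proposition~\ref{prop: phi constr} applies directly. The nilpotency trick $T' = \mathbf{p}T/(\mathbf{p}T)\mathfrak{n}^m$ is the key device you are missing.
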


\begin{proof}
We
write $\mathfrak{n}$ for the ideal of $\Gamma$
generated by the arrows of the Ginzburg quiver,
and {\bfseries p} for the left adjoint to the
canonical functor $\hc(\Gamma)\fl\dc(\Gamma)$.
Let $L$, $M$ and $T$ be dg modules
of finite total dimension. Since
$\homph_{\dc_\text{fd}(\Gamma)}(L,\susp M)$ is finite
dimensional, there exists a quasi-isomorphism
$M\stackrel{ w}{\gfl} M'$, where $M'$
is of finite total dimension and such that
any morphism $L\fl\susp M$ may be represented
by a fraction:
$$\xymatrix@-1pc{L\bdr & & \susp M. \bg^{\susp w} \\ & \susp M' &}$$
We thus obtain a surjection
$\xymatrix{\ext^1_{\hc_\text{fd}(\Gamma)}(L,M') \dre &
\ext^1_{\dc_\text{fd}(\Gamma)}(L,M).}$ Fix a $k$-linear
section $s$ of this surjection.
Choose $m$ such that $M'\mathfrak{n}^m$ and
$L\mathfrak{n}^m$ vanish. Then for the cone
$Y$ of any morphism from $\susm M'$ to $L$,
we have $Y\mathfrak{n}^m = 0$. For $X$ being any one
of $L$, $M'$, $Y$ we thus have isomorphisms
$$\homph_{\dc_\text{fd}(\Gamma)}(T,X) \simeq \homph_{\hc(\Gamma)}(\text{\bfseries p}T,X)
\simeq\homph_{\hc_\text{fd}(\Gamma)}(T',X)$$
where $T'$ denotes the finite dimensional quotient of
{\bfseries p}$T$ by $(\text{\bfseries p}T)\mathfrak{n}^m$.
The category $\hc_\text{fd}(\Gamma)$ is the stable category of
a Hom-finite Frobenius category. By section~\ref{ssection: H stable},
the category $\hc_\text{fd}(\Gamma)$ has constructible cones:
There exists a constructible map
$\varphi_{L,M'}$ (associated with $T'$)
as in section~\ref{ssection: H}. By composing
this map with the section $s$, we obtain a map
$\varphi_{L,M}$ as required.
\end{proof}

\begin{prop}
Let $\Gamma$ be the Ginzburg dg algebra associated with
a Jacobi-finite quiver. Then the generalized cluster category
$\cat_\Gamma$ has constructible cones.
\end{prop}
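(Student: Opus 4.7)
The plan is to deduce the statement from Proposition~\ref{prop: dfd H} (already established for $\dc_\text{fd}(\Gamma)$) by using Lemmas~\ref{lem: perg} and~\ref{lem: fdg} to transport the relevant data from $\cat_\Gamma$ to $\dc_\text{fd}(\Gamma)$.

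Given $T, L, M \in \cat_\Gamma$, I would use the equivalence $\fc \simeq \cat_\Gamma$ to lift them to objects of $\fc \subset \perg$. Define $T' := \tau_{\geq -2}\tau_{\leq -1} T$, $L' := \tau_{\geq -2}\tau_{\leq -1} L$ in $\dc_\text{fd}(\Gamma)$, together with an object $M' \in \dc_\text{fd}(\Gamma)$ chosen so that $\susp M'$ identifies with $\tau_{\geq -2}\susp M$. Since $\tau_{\leq -1} L$ and $\susp M$ lie in $\susp\fc \subset {}^\perp(\per^{\leq -3})$, Lemmas~\ref{lem: perg} and~\ref{lem: fdg} combine to yield a natural $k$-linear bijection
$$
\cat_\Gamma(L, \susp M) \simeq \homph_{\perg}(\tau_{\leq -1} L, \susp M) \simeq \homph_{\dc_\text{fd}(\Gamma)}(L', \susp M'),
$$
and an isomorphism of algebras $B := \End_{\cat_\Gamma}(T) \simeq \End_{\dc_\text{fd}(\Gamma)}(T')$.

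For $\eps \in \cat_\Gamma(L, \susp M)$ with image $\eps' \in \homph_{\dc_\text{fd}(\Gamma)}(L', \susp M')$, I would lift $\eps$ to $\tilde\eps \in \homph_{\perg}(\tau_{\leq -1} L, \susp M)$ and form the $\perg$-triangle $M \gfl \tilde Y \gfl \tau_{\leq -1} L \stackrel{\tilde\eps}{\gfl} \susp M$. Its image in $\cat_\Gamma$ realizes the cylinder triangle of $\eps$, and $\tilde Y$ lies in ${}^\perp(\per^{\leq -3})$ since it is squeezed between two such objects. The central step is then to show that the 4-term $B$-module sequence attached to this triangle via $\cat_\Gamma(T, -)$, identified term-by-term via Lemmas~\ref{lem: perg} and~\ref{lem: fdg} with $\homph_{\dc_\text{fd}(\Gamma)}(T', \tau_{\geq -2}(-))$ applied to $(M, \tilde Y, \tau_{\leq -1} L, \susp M)$, coincides up to natural isomorphism with the sequence obtained by applying $\homph_{\dc_\text{fd}(\Gamma)}(T', -)$ to the $\dc_\text{fd}(\Gamma)$-triangle associated to $\eps'$. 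Granted this compatibility, composing the constructible map $\varphi_{L', M'}$ supplied by Proposition~\ref{prop: dfd H} with the bijection above yields the desired constructible lift $\varphi_{L, M}$.

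The main obstacle I expect is precisely the identification of the two 4-term sequences: since $\tau_{\geq -2}$ is not triangulated, the intermediate object $\tau_{\geq -2}\tilde Y$ coming from the $\perg$-triangle need not agree with the middle term $Y^\sharp$ of the $\dc_\text{fd}(\Gamma)$-cone, and analogous mismatches occur for the outer terms. The expected remedy is that after applying $\homph_{\dc_\text{fd}(\Gamma)}(T', -)$, the truncation triangles $H^i(-)[j] \gfl \tau_{\geq a}(-) \gfl \tau_{\geq b}(-) \gfl H^i(-)[j+1]$ mediating the discrepancies give long exact sequences whose outer terms vanish by 3-Calabi--Yau duality combined with the t-structure orthogonalities $\homph(\per^{\leq n}, \per^{\geq n+1}) = 0$. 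This is precisely the type of vanishing already invoked in the proofs of Lemmas~\ref{lem: perg} and~\ref{lem: fdg}; a careful five-lemma argument then forces the two sequences to agree.
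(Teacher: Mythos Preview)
Your proposal follows the same route as the paper's proof: reduce to $\dc_\text{fd}(\Gamma)$ via Lemmas~\ref{lem: perg} and~\ref{lem: fdg} and then invoke Proposition~\ref{prop: dfd H}. The only cosmetic difference is that the paper lifts $L$ directly to $\susp\fc$ and $M$ to $\fc$ (using that the projection $\perg\to\cat_\Gamma$ restricted to either $\fc$ or $\susp\fc$ is an equivalence), so that the relevant triangle $M\to Y\to L\stackrel{\eps}{\to}\susp M$ already lives in $\perg$ with $L\in\susp\fc$; this avoids carrying the extra $\tau_{\leq -1}$ on $L$ throughout. With this choice, all five terms $\susm L,M,Y,L,\susp M$ lie in ${}^\perp(\per^{\leq -3})$, so Lemma~\ref{lem: perg} applies directly to the outer four and the five lemma handles the middle term, which is exactly what the paper does.

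Regarding the obstacle you flag---that $\tau_{\geq -2}$ is not triangulated, so the $\dc_\text{fd}(\Gamma)$-triangle on $\overline{\eps}$ need not have middle term $\tau_{\geq -2}Y$---you are right that this is the one point requiring care, and the paper does not spell it out either: it simply asserts that the two $4$-term sequences agree. Your proposed remedy (compare via the truncation triangles, use the orthogonality $\homph({}^\perp(\per^{\leq -3}),\per^{\leq -3})=0$ together with $3$-Calabi--Yau duality to kill the error terms, then apply the five lemma to $\homph_{\dc_\text{fd}(\Gamma)}(\tau_{[-2,-1]}T,-)$ evaluated on the morphism of triangles) is exactly the right mechanism and is consistent with how Lemmas~\ref{lem: perg} and~\ref{lem: fdg} are proved. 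So your outline is correct and matches the paper; you have simply been more explicit than the paper about the one delicate step.
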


\begin{proof}
Let $L$ and $M$ be in $\cat_\Gamma$. Up to replacing them
by isomorphic objects in $\cat_\Gamma$, we may assume that
$L$ belongs to $\susp\fc$ and $M$ to $\fc$.
The projection then induces an isomorphism
$\homph_{\perg}(L,\susp M) \stackrel{\simeq}{\gfl}
\cat_\Gamma(L,\susp M)$. Let $\eps$ be in
$\homph_{\perg}(L,\susp M)$, and let
$M\fl Y\fl L\stackrel{\eps}{\fl}\susp M$
be a triangle in $\perg$.
Let us denote the sets of morphisms
$\homph_{\perg}(\;,\;)$ by $(\;,\;)$.
There is a commutative diagram
$$
\xymatrix@-.75pc{
(\tau_{\leq-1} T,\susm L) \bas\dr &
(\tau_{\leq-1} T,M) \bas\dr &
(\tau_{\leq-1} T,Y) \bas\dr &
(\tau_{\leq-1} T,L) \bas\dr &
(\tau_{\leq-1} T,\susp M) \bas \\
\cat_\Gamma(T,\susm L) \dr & \cat_\Gamma(T,M) \dr &
\cat_\Gamma(T,Y) \dr & \cat_\Gamma(T,L) \dr &
\cat_\Gamma(T,\susp M),
}
$$
where the morphisms in the first two and in the last two columns
are isomorphisms by lemma~\ref{lem: perg}, and so is the middle
one by the five lemma.
Note that $\tau_{\leq-1}T$ belongs to $\susp\fc$,
so that, by lemma~\ref{lem: fdg}, we have isomorphisms:
$$\homph_{\perg}(L,\susp M) \simeq \homph_{\dc_\text{fd}(\Gamma)}
(\tau_{\geq-2}L,\tau_{\geq-2}\susp M)$$ and
$$\cat_\Gamma(T,X) \simeq
\homph_{\dc_\text{fd}(\Gamma)}
(\tau_{[-2,-1]}T,\tau_{\geq-2}X)$$ for
$X\in\ens{\susm L,M,L,\susp M}$ and thus also for
$X$ being the middle term of any triangle in
$\ext^1_{\perg}(L,M)$.
Let $\eps\in\cat_\Gamma(L,\susp M)$ and let
$M\fl Y\fl L\stackrel{\eps}{\fl} \susp M$
be a triangle in $\cat_\Gamma$. Let $\overline{\eps}$
be the morphism in $\homph_{\dc_\text{fd}(\Gamma)}(\tau_{\geq -2}L,
\tau_{\geq-2}\susp M)$ corresponding to $\eps$ and let
$\tau_{\geq-2}M\fl Z\fl \tau_{\geq-2}L\stackrel{\overline{\eps}}{\fl} \tau_{\geq-2}\susp M$
be a triangle in $\dc_\text{fd}(\Gamma)$.
Then the sequence obtained from
$\susm L \fl M \fl Y\fl L \fl \susp M$
by applying the functor $\cat_\Gamma(T,?)$
is isomorphic to the one obtained from
$\susm \tau_{\geq-2}L \fl \tau_{\geq-2}M\fl Z\fl \tau_{\geq-2}L\fl \tau_{\geq-2}\susp M$
by applying the functor $\homph_{\dc_\text{fd}(\Gamma)}(\tau_{[-2,-1]}T,?)$.
By proposition~\ref{prop: dfd H},
the cylinders of the morphisms $L\fl\susp M$
are constructible with respect to $T$.
\end{proof}

\section{Proof of theorem~\ref{theo: mf}}\label{section: proof mf}

Let $T$ be a cluster tilting object of $\cat$.
Let $L$ and $M$ be two objects in $\cat$, such that
the cylinders of the morphisms $L\fl\susp M$ and $M\fl\susp L$
are constructible with respect to $T$.
Let us begin the proof with some notations
and some considerations on constructibility.
Let $\eps$ be a morphism in $\clmy$ for some $Y\in\cat$,
and let
$M\stackrel{i}{\gfl} Y' \stackrel{p}{\gfl} L \stackrel{\eps}{\gfl} \susp M$
be a triangle in $\cat$. The image of $\eps$ under
$\vph_{L,M}$ lifts the orbit of the matrix representation
of $\overrightarrow{A_4}$ in $\modb$ given by
$\xymatrix@-.1pc{FM \dr^{Fi} & FY' \dr^{Fp} &
FL \dr^{F\eps\;\;\;} & F\susp M}$ (the definition
of the constructible map $\vph_{L,M}$ is given in section~\ref{ssection: H}).
In all of this section, we will take the liberty of
denoting by $Fi$, $Fp$ and $FY'$ the image $\vph_{L,M}(\eps)$.
Denote by $\Delta$ the dimension vector $\vdim FL + \vdim FM$.
For any object $Y$ in $\cat$
and any non-negative $e$, $f$ and $g$ in $\ko$,
let $\wefg$ be the subset of
$$\pclmy\times\coprod_{d\leq \Delta}
\prod_{i=1}^n \gr_{g_i}(k^{d_i})$$
formed by the pairs $([\eps],E)$ such that
$E$ is a submodule of $FY'$ of dimension vector $g$,
$\vdim (Fp)E = e$ and $\vdim (Fi)^{-1}E = f$,
where $FY'$,$Fi$ and $Fp$ are given by $\vph_{L,M}(\eps)$.
We let
\begin{itemize}
 \item $\wg$ denote the union of all $\wefg$ with
$e\leq\vdim FL$ and $f\leq\vdim FM$ and
 \item $\wef$ denote the union
of all $\wefg$ with $g\leq \vdim FL + \vdim FM$.
\end{itemize}

\begin{lem}\label{lem: wefg constr}
The sets $\wefg$ are constructible.
\end{lem}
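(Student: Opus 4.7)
The plan is to realize $\wefg$ as the image, under the projectivization quotient, of a constructible subset of $\clmy\times\coprod_d\prod_i\gr_{g_i}(k^{d_i})$, obtained by pulling back a universal constructible locus along the constructible map $\vph_{L,M}\times\mathrm{id}$. Concretely, the constructibility of $\vph_{L,M}$ allows me to write $\clm=\bigsqcup_{j\in J}U_j$ as a finite disjoint union of locally closed subsets on each of which $\vph_{L,M}$ restricts to a morphism of varieties $\vph_j\colon U_j\gfl\rep_{d_j}(B\overrightarrow{A_4})$ for a fixed dimension vector $d_j=(d_{j,1},d_{j,2},d_{j,3},d_{j,4})$ with $d_{j,1}=\vdim FM$, $d_{j,3}=\vdim FL$, $d_{j,4}=\vdim F\susp M$ and $d_{j,2}\leq d_{j,1}+d_{j,3}$. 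Since $\clmy$ is constructible in $\clm$ by proposition~\ref{prop: clmy constr}, I would refine the stratification so that each $U_j\subseteq\clmy$.

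Next, for each admissible $d$, I would show that the subset
$$
\Theta^d_{e,f,g}=\ens{((\alpha,\beta,\gamma),E)\;:\;E\text{ a }B\text{-submodule},\;\vdim\beta(E)=e,\;\vdim\alpha^{-1}(E)=f}
$$
of $\rep_d(B\overrightarrow{A_4})\times\prod_i\gr_{g_i}(k^{d_{2,i}})$ is constructible: the $B$-stability of $E$ in the middle term is a closed incidence condition between the structure maps on $k^{d_2}$ and the tautological subbundle, while the conditions $\vdim\beta(E)=e$ and $\vdim\alpha^{-1}(E)=f$ each amount to intersections of rank strata of linear maps depending algebraically on $((\alpha,\beta,\gamma),E)$, and are hence constructible. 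Pulling back $\Theta^{d_j}_{e,f,g}$ via $\vph_j\times\mathrm{id}$ on each stratum and taking the disjoint union over $j$ then produces a constructible subset $\widetilde W_{e,f,g}$ of $\clmy\times\coprod_d\prod_i\gr_{g_i}(k^{d_i})$.

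Finally, the three defining conditions of $\Theta^d_{e,f,g}$ are invariant under the $k^\ast$-action rescaling $\eps$ and under the $GL(d)$-ambiguity inherent in the lift $\vph_{L,M}$ of $\Phi_{L,M}$ (rescaling or change of basis preserves submodule structure, images and preimages). Hence $\widetilde W_{e,f,g}$ is saturated under the quotient map to $\pclmy\times\coprod_d\prod_i\gr_{g_i}(k^{d_i})$ and its image is precisely $\wefg$, which is then constructible by Chevalley's theorem. The main obstacle is this descent step: one must verify carefully that the $GL(d)$-choice of lift together with the $k^\ast$-scaling of $\eps$ preserves all three defining conditions of $\wefg$, so that the projectivized set is truly the image of the constructible set computed on $\clmy$.
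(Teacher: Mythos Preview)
Your approach is essentially the paper's: pull back, along the constructible lift $\vph_{L,M}$, the universal locus where $E$ is a submodule of the middle term with prescribed image/preimage dimensions, and then pass to $\pclmy$. The paper phrases this more tersely---it simply takes fibers of the constructible function $(\eps,E)\mapsto(\vdim(Fi)^{-1}E,\vdim(Fp)E)$ on the constructible set $\vg$ rather than stratifying---and is equally brief about the descent; note however that your saturation argument is not quite right, since $GL(d)$-invariance of $\Theta^d_{e,f,g}$ holds only when $E$ is moved together with the representation, so $\widetilde W_{e,f,g}$ need not be $k^\ast$-saturated, but this is harmless because your appeal to Chevalley already gives the constructibility you need.
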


\begin{proof}
Denote by $\Delta$ the dimension vector
$\vdim FL + \vdim FM$, and
fix a dimension vector $g$.
Consider the map induced by $\vph_{L,M}$ which sends a pair
$(\eps,E)$ in $\clmy\times\coprod_{d\leq \Delta}\prod_{i\in Q_0}
\gr_{g_i}(k^{d_i})$ to $(Fi,Fp,FY',E)$.
By our assumption, this map (exists and) is constructible.
Therefore, the subset of
$$\clmy\times\coprod_{d\leq \Delta}\prod_{i\in Q_0}
\gr_{g_i}(k^{d_i})$$ formed by the pairs $(\eps,E)$
such that $E$ is a submodule of $FY'$ is a constructible
subset. We denote by $\vg$ this constructible subset.
We thus have a constructible function
$\vg\gfl\zb^{2n}$ sending the pair $(\eps,E)$ to
$(\vdim (Fi)^{-1}E,\vdim(Fp)E)$. This function induces
a constructible function $\delta : \wg \gfl \zb^{2n}$,
and the set $\wefg$ is the fiber of $\delta$ above $(e,f)$.
\end{proof}

The fiber above the class $[\eps]$ of the first projection $\wg\fl\pclmy$
is $\{[\eps]\}\times\grg FY'$ and thus all fibers have Euler characteristics
equal to that of $\grg FY$. Therefore we have:
$$
(\ast\ast) \;\;\;\;\; \chi\big{(}\wlmg\big{)} = \chi\big{(}\pclmy\big{)}\chi(\grg FY).
$$

Define $\elef$ to be the variety $\pclm \times \gre FL \times \grf FM$.
Consider the following map:
\begin{eqnarray*}
 \coprod_{Y\in\yc} \wef & \stackrel{\psi}{\gfl} & \elef \\
([\eps],E) & \longmapsto & \big{(}[\eps], (Fp)E, (Fi)^{-1}E\big{)}.
\end{eqnarray*}
By our assumption,
the map $\psi$ is constructible.

Let $\elun$ be the subvariety of $\elef$ formed by the points in the image of $\psi$,
and let $\elde$ be the complement of $\elun$ in $\elef$.

We can now start to compute:
\begin{eqnarray*}
 \dim \cat(L,\susp M)X_L X_M & = & \xs^{-\coi(L\oplus M)}
\sum_{e,f} \chi(L(e,f))\prod_{i=1}^n x_i^{\langle S_i, e+f \rangle_a} \\
 & = & \sum_{e,f} \chi(\elun) \underline{x}^{-\coi (L\oplus M)}
\prod_{i=1}^n x_i^{\langle S_i, e+f \rangle_a} \\
 & & + \sum_{e,f} \chi(\elde) \underline{x}^{-\coi (L\oplus M)}
\prod_{i=1}^n x_i^{\langle S_i, e+f \rangle_a}.
\end{eqnarray*}
Denote by $s_1$ (resp. $s_2$) the first term (resp. second term) in the right hand side
of the last equality above. We first compute the sum $s_1$.

As shown in~\cite{CC}, the fibers of $\psi$ over $\elun$ are affine spaces.
For the convenience of the reader, we sketch a proof.
Let $([\eps],U,V)$ be in $\elun$. Denote by
$Y$ the middle term of $\eps$ and by
$\gruv$ the projection of the fiber $\psi^{-1}([\eps],U,V)$
on the second factor $\gr FY$.
Let $W$ be a cokernel of the injection of $U$ in $FM$.
$$
\xymatrix@R-5pt{
W & & & \\
FM \hae^\pi \dr^i & FY \dr^p & FL \dr & F\susp M \\
U \ham^{i_U} \dr & E \ham \dre & V \ham_{i_V}&
}
$$
\begin{lem}\label{lem: CC}\emph{(Caldero--Chapoton)}
There is a bijection $\homb(V,W) \gfl \gruv $.
\end{lem}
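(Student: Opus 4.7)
The plan is to identify $\gruv$ with the set of $B$-linear sections of a certain short exact sequence, and then use that sections of a short exact sequence of modules form a torsor over the Hom from the quotient to the kernel.

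First, I would note that the sequence $FM \xrightarrow{Fi} FY \xrightarrow{Fp} FL \xrightarrow{F\eps} F\susp M$ is exact and, since it comes from a triangle under a homological functor, $Fi$ is injective. Identify $FM$ with $Fi(FM) \subseteq FY$, so $\ke(Fp) = FM$. Set $P = (Fp)^{-1}(V) \subseteq FY$. Restricting $Fp$ to $P$ yields a short exact sequence
$$0 \gfl FM \gfl P \gfl V \gfl 0,$$
and quotienting by $U \subseteq FM$ gives a short exact sequence of $B$-modules
$$(\dagger) \qquad 0 \gfl W \gfl P/U \stackrel{\bar p}{\gfl} V \gfl 0,$$
where $W = FM/U$ as in the statement.

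Next, I would establish a bijection between $\gruv$ and the set $\operatorname{Sec}(\dagger)$ of $B$-linear sections of $\bar p$. Given $E \in \gruv$, the conditions $(Fp)E = V$ and $E \cap FM = U$ force $E \subseteq P$ and $(E/U) \cap W = 0$, so $\bar p$ restricts to an isomorphism from $E/U$ onto $V$; its inverse is a section $s_E \colon V \to P/U$. Conversely, any section $s$ defines $E_s = q^{-1}(s(V)) \subseteq P \subseteq FY$, where $q \colon P \twoheadrightarrow P/U$, and a routine verification shows $E_s \in \gruv$ with $s_{E_s} = s$ and $E_{s_E} = E$.

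Finally, I would invoke the standard fact that $\operatorname{Sec}(\dagger)$, if non-empty, is a torsor under $\homb(V,W)$: any two sections $s, s'$ differ by a map $s' - s \colon V \to \ke \bar p = W$, and conversely $s + \iota \alpha$ is a section for any $\alpha \in \homb(V,W)$, where $\iota \colon W \hookrightarrow P/U$. Since by hypothesis $([\eps], U, V) \in \elun$ lies in the image of $\psi$, the set $\gruv$ is non-empty, so we may fix a base point $E_0 \in \gruv$ with corresponding section $s_0$. The map $\alpha \mapsto E_{s_0 + \iota \alpha}$ is the desired bijection $\homb(V,W) \to \gruv$.

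The only delicate point is the torsor identification, which reduces to checking that $P/U$ really does sit in the short exact sequence $(\dagger)$ of $B$-modules (rather than just of $k$-vector spaces); this is where one uses that $U$ and $V$ are submodules, not just subspaces, together with the fact that $Fi$, $Fp$, and the quotients involved are $B$-linear. Everything else is formal.
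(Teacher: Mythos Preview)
Your approach and the paper's are the same in content: the paper directly defines a free transitive action of $\homb(V,W)$ on $\gruv$ by setting, for $E\in\gruv$ and $g\in\homb(V,W)$, $E_g=\{i(m)+x : m\in FM,\ x\in E,\ g(px)=\pi(m)\}$, whereas you first identify $\gruv$ with the set of $B$-linear splittings of $(\dagger)$ and then invoke the standard torsor structure on splittings. Unwinding your correspondence $E\leftrightarrow s_E$ together with the action $s\mapsto s+\iota\alpha$ recovers exactly the paper's formula.

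One step is wrong, though easily repaired: $Fi$ need \emph{not} be injective. Applying the homological functor $F=\cat(T,-)$ to the triangle gives a long exact sequence, and $\ker(Fi)$ is the image of $\cat(T,\susm\eps)\colon\cat(T,\susm L)\to\cat(T,M)$, which has no reason to vanish (e.g.\ $L=\susp T$, $M=T$, $\eps=\mathrm{id}_{\susp T}$). What rescues the argument is that $\ker(Fi)\subseteq U$ automatically: for any $E\in\gruv$ one has $U=(Fi)^{-1}E$, hence $\ker(Fi)=(Fi)^{-1}(0)\subseteq(Fi)^{-1}(E)=U$. The paper uses precisely this observation in its parenthetical ``since the kernel of $i$ is included in $U$''. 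In your language, replace $FM\subseteq FY$ by $\operatorname{im}(Fi)$ and $U\subseteq FY$ by $(Fi)(U)$; because $\ker(Fi)\subseteq U$, the quotient $\operatorname{im}(Fi)/(Fi)(U)$ is still canonically $FM/U=W$, so the sequence $(\dagger)$ and everything after it go through unchanged.
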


\begin{proof}
Define a free transitive action of $\homb(V,W)$ on $\gruv$
in the following way: For any $E$ in $\gruv$ and any $g$ in
$\homb(V,W)$, define $E_g$ to be the submodule of $FY$ of
elements of the form $i(m)+x$ where $m$ belongs to $FM$,
$x$ belongs to $E$ and $gpx=\pi m$.
Note that $E_g$ belongs to $\gruv$ (since the kernel of $i$
is included in $U$), that $E_0=E$
and that $(E_g)_h=E_{g+h}$.
This action is free: An element $i(m)+x$ is in $E$ if and only if
$m$ is in $U$. This is equivalent to the vanishing of
$\pi m$, which in turn is equivalent to $px$ belonging to
the kernel of $g$.
This action is transitive: Let $E$ and $E'$ be in $\gruv$.
For any $v$ in $V$, let $g(v)$ be $\pi(x'-x)$ where
$x\in E$, $x'\in E'$ and $px=px'=v$. This defines a map
$g:V\gfl W$ such that $E_g=E'$.
\end{proof}

By lemma~\ref{lem: CC}, we obtain the following equality
between the Euler characteristics:
$$
\sum_{\class} \chi(\wef) = \chi(\elun),
$$
which implies the equality
$$
s_1 = \sum_{e,f,\class} \chi\big{(}\wef\big{)}\underline{x}^{-\coi (L\oplus M)}
\prod_{i=1}^n x_i^{\langle S_i, e+f \rangle_a}.
$$

If the pair $([\eps],E)$ belongs to $\wefg$, then by~\cite[lemma 5.1]{Pcc},
we have 
$$
 \sum_{i=1}^n \langle S_i, e+f \rangle_a [P_i] - \coi (L\oplus M)
= \sum_{i=1}^n \langle S_i, g \rangle_a [P_i] - \coi (\mte)
$$
and $\coi(\mte) = \coi Y$ since the morphism $\eps$ is in $\clmy$.
Therefore,
\begin{eqnarray*}
 s_1 & = & \sum_{e,f,g,\class} \chi\big{(}\wefg\big{)}\underline{x}^{-\coi (L\oplus M)}
\prod_{i=1}^n x_i^{\langle S_i, e+f \rangle_a} \\
 & = & \sum_{e,f,g,\class} \chi\big{(}\wefg\big{)}\underline{x}^{-\coi Y}
\prod_{i=1}^n x_i^{\langle S_i, g \rangle_a} \\
 & = & \sum_{g,\class} \chi \big{(} \wlmg \big{)} \underline{x}^{-\coi Y}
\prod_{i=1}^n x_i^{\langle S_i, g \rangle_a} \\
 & = & \sum_{\class} \sum_g \chi\big{(}\pclmy\big{)}\chi(\grg FY) \underline{x}^{-\coi Y}
\prod_{i=1}^n x_i^{\langle S_i, g \rangle_a} \text{ by }(\ast\ast)\\
 & = & \sum_{\class} \chi\big{(}\pclmy\big{)} X_Y.
\end{eqnarray*}

We now consider the sum $s_2$.
Recall that since $\cat$ is $2$-Calabi--Yau, there is an
isomorphism
$$
\phi_{L,M}: \cat(\susm L,M)\gfl
D\cat(M,\susp L).
$$
We denote by $\phi$ the induced
duality pairing:
\begin{eqnarray*}
 \phi : \cat(\susm L,M)\times\cat(M,\susp L) & \gfl & k \\
(a,b) & \longmapsto & \phi_{L,M}(a)b.
\end{eqnarray*}
Let $\cefg$ consist of all pairs $\big{(} ([\eps], U, V) , ([\eta],E) \big{)}$
in $\elde \times \wmlg$ such that
$\phi (\susm\eps,\eta) \neq 0$, $(Fi)^{-1}E = V$ and $(Fp)E = U$,
where $Fi$, $Fp$ are given by $\vph_{M,L}(\eta)$.
The set $\cefg$ is constructible, by our assumption.
Let $\cef$ be the union of all $\cefg$,
where $Y$ runs through the set of representatives
$\yc$, and $g$ through $\ko$.
We then consider the following two projections
$$
\xymatrix{
\cef \bas_{p_1} & \text{and} & \cefg \bas_{p_2} \\
\elde & & \wmlefg .
}
$$

The aim of the next proposition is to show that the projections
$p_1$ and $p_2$ are surjective, and to describe their fibers.

Let $U$ be in $\gre FL$, and $V$ be in $\grf FM$.
Let $U \stackrel{i_U}{\gfl} L$ and $V \stackrel{i_V}{\gfl}M$
lift these two inclusions to the triangulated category $\cat$.
As in section 4 of~\cite{Pcc}, let us consider the
following two morphisms:
$\al$ from
$\cat(\susm L,U)\oplus\cat(\susm L,M)$ to
$\cat/(T)\left( \susm V, U \right) \oplus (\susm V, M )
\oplus  \cat/(\susp T)\left(\susm L, M \right)$
and
$$
 \al':
(\susp T)(U,\susp V)\oplus \cat(M,\susp V) \oplus (\susp^2 T)(M, \susp L)
\gfl
\cat(U, \susp L) \oplus \cat(M,\susp L)
$$
defined by:
$$
\al(a,b) = (a\susm i_V, i_U a \susm i_V - b \susm i_V , i_U a - b )
$$
and
$$
\al'(a,b,c) = \Big{(}(\susp i_V) a + c\,i_U + (\susp i_V)b\,i_U ,
-c - (\susp i_V)b \Big{)}.
$$

Remark that the maps $\al$ and $\al'$ are
dual to each other via the pairing $\phi$.
In the following lemma, orthogonal
means orthogonal with respect to this pairing.

\begin{prop}\emph{\cite[proposition 3]{CK1}}\label{prop: CK1 3}
With the same notations as above,
the following assertions are equivalent:
\begin{itemize}
 \item[(i)] The triple $([\eps], U, V)$ belongs to $\elde$.
 \item[(ii)] The morphism $\susm\eps$ is not orthogonal to $\cat(M,\susp L) \cap \im \al'$.
 \item[(iii)] There is an $\eta \in \cat( M, \susp L )$ such that
$\phi(\susm \eps,\eta) \neq 0$ and such that if
$$L \stackrel{i}{\gfl} N \stackrel{p}{\gfl} M \stackrel{\eta}{\gfl}\susp L$$
is a triangle in $\cat$, then there exists $E\in\gr FN$ with
$(Fi)^{-1}E = V$ and $(Fp)E = U$.
\end{itemize}
\end{prop}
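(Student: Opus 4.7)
The plan is to translate the geometric condition that $(U,V)$ does not lift to a compatible submodule $E$ of the middle term of $\eps$ into an algebraic condition expressible through the maps $\al$ and $\al'$, and then to use the 2-Calabi--Yau pairing $\phi$ to dualize it into the existence of a morphism $\eta$ going in the opposite direction.

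For (i) $\Leftrightarrow$ (ii), I would first recall the characterization (implicit in section~4 of~\cite{Pcc}) of the set of $\eps$ such that $([\eps], U, V) \in \elun$: applying the cohomological functor $F$ to the triangle $M \stackrel{i}{\gfl} Y' \stackrel{p}{\gfl} L \stackrel{\eps}{\gfl} \susp M$ and unpacking the pullback structure, the existence of $E \subseteq FY'$ with $(Fp)E = U$ and $(Fi)^{-1}E = V$ amounts to $\susm\eps$ lying in the image of $\al$ projected onto the $\cat/(\susp T)(\susm L, M)$ component. I would then verify that $\al$ and $\al'$ are dual maps through the 2-Calabi--Yau pairing $\phi$; consequently, $\susm\eps$ lies in $\im\al$ if and only if $\susm\eps$ is orthogonal to $\cat(M,\susp L)\cap\im\al'$. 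Negating yields (i) $\Leftrightarrow$ (ii).

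For (ii) $\Leftrightarrow$ (iii), the implication (iii) $\Rightarrow$ (ii) is immediate: by the symmetric version of the characterization above with the roles of $L$ and $M$ swapped, a morphism $\eta$ arising from a triangle $L \fl N \fl M \stackrel{\eta}{\fl} \susp L$ that admits a compatible submodule $E \in \gr FN$ automatically lies in $\cat(M,\susp L) \cap \im\al'$, and the condition $\phi(\susm\eps,\eta)\neq 0$ witnesses non-orthogonality. Conversely, given (ii), choose some $\eta \in \cat(M,\susp L) \cap \im\al'$ with $\phi(\susm\eps,\eta)\neq 0$; applying the symmetric characterization to $\eta$ produces the required $E \in \gr FN$.

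The main obstacle will be the first step, specifically establishing that $\al$ and $\al'$ are genuinely dual to one another through $\phi$. The two maps have codomains and domains involving quotients by morphisms factoring through $T$ or through $\susp T$, so verifying the duality requires a strengthened form of the 2-Calabi--Yau pairing that respects these ideals. Once this refined duality is in place, the rest of the proof reduces to the symmetry between $L$ and $M$ together with elementary manipulations involving the long exact sequence coming from $F$.
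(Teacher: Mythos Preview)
Your overall strategy matches the paper's: invoke \cite[lemma~4.2]{Pcc} to translate (i) into a linear-algebraic condition on $\susm\eps$, dualize via $\phi$ and the duality between $\al$ and $\al'$ to obtain (ii), and use the same lemma symmetrically for (ii)$\Leftrightarrow$(iii). The last part is fine. But your formulation of the linear condition for (i) is wrong, and this error propagates to the duality step.

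The condition from \cite[lemma~4.2]{Pcc} is that $([\eps],U,V)\in\elun$ if and only if $\susm\eps$ belongs to $p(\ke\al)$, where $p$ is the projection of the \emph{domain} $\cat(\susm L,U)\oplus\cat(\susm L,M)$ onto its second factor $\cat(\susm L,M)$. You instead write that it amounts to ``$\susm\eps$ lying in the image of $\al$ projected onto the $\cat/(\susp T)(\susm L,M)$ component''. That cannot be right: projecting $\im\al$ onto that component of the codomain gives everything (take $a=0$ in $\al(a,b)$), so your condition would be vacuous. The relevant subspace is the projection of the \emph{kernel} of $\al$, and it sits inside a summand of the domain, not of the codomain.

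This matters for the dualization. What must be dualized is the composite
\[
q\colon \ke\al\hookrightarrow \cat(\susm L,U)\oplus\cat(\susm L,M)\stackrel{p}{\gfl}\cat(\susm L,M),
\]
not $\al$ itself. The dual of $q$ under $\phi$ is $\cat(M,\susp L)\hookrightarrow \cat(U,\susp L)\oplus\cat(M,\susp L)\to\coke\al'$, whose kernel is exactly $\cat(M,\susp L)\cap\im\al'$; the double-orthogonal $\im q=((\im q)^\perp)^\perp$ then yields (i)$\Leftrightarrow$(ii). Your one-line claim ``$\susm\eps$ lies in $\im\al$ iff $\susm\eps$ is orthogonal to $\cat(M,\susp L)\cap\im\al'$'' does not follow from the bare duality of $\al$ and $\al'$ (that only gives $(\im\al)^\perp=\ke\al'$ and $(\ke\al)^\perp=\im\al'$). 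Incidentally, the duality of $\al$ and $\al'$ that you flag as the main obstacle is already remarked on immediately before the proposition; the actual content of the argument is the composite-map step above.
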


\begin{proof}
Let us start with the equivalence of (i)
and (ii).
The same proof as that in~\cite[proposition 3]{CK1}
applies in this setup:
Denote by $p$ the canonical projection of
$\cat(\susm L,U)\oplus\cat(\susm L,M)$ onto
$\cat(\susm L,M)$. Then, by~\cite[lemma 4.2]{Pcc},
assertion (i) is equivalent to $\susm \eps$ not belonging to
$p(\ke\al)$. That is, the morphism $\susm \eps$
is not in the image of the composition:
$$
q : \ke\al \gfl \cat(\susm L,U)\oplus\cat(\susm L,M)
\gfl \cat(\susm L,M).
$$
So (i) holds if and only if $\susm \eps$
is not in the orthogonal of the orthogonal
of the image of $q$. The orthogonal of the image of $q$
is the kernel of its dual, which is given by
the composition:
$$
\cat(M,\susp L)\gfl \cat(U,\susp L)\oplus\cat(M,\susp L) \gfl
\coke \al'.
$$
Therefore assertion (i) is equivalent to the morphism
$\susm\eps$ not being in the orthogonal of
$\cat(M,\susp L) \cap \im\al'$ which proves that
(i) and (ii) are equivalent.

By~\cite[lemma 4.2]{Pcc}, a morphism in $\cat(M,\susp L)$
is in the image of $\al'$ if and only if it satisfies
the second condition in (iii). Therefore (ii) and (iii) are equivalent.
\end{proof}

A variety $X$ is called an \emph{extension of affine spaces}
in~\cite{CK1} if there is a vector space $V$ and a surjective
morphism $X\gfl V$ whose fibers are affine spaces
of constant dimension. Note that extensions of affine spaces
have Euler characteristics equal to $1$.

\begin{prop}\emph{\cite[proposition 4]{CK1}}\label{prop: CK1 4}
\begin{itemize}
 \item[a)] The projection $\cef \stackrel{p_1}{\gfl} \elde$ is surjective
and its fibers are extensions of affine spaces.
 \item[b)] The projection $\cefg \stackrel{p_2}{\gfl}\wmlefg$ is surjective
and its fibers are affine spaces.
 \item[c)] If $\cefg$ is not empty, then we have
$$\sum_{i=1}^n \langle S_i, e+f \rangle_a [P_i] - \coi (L\oplus M)
=  \sum_{i=1}^n \langle S_i, g \rangle_a [P_i] - \coi Y .$$
\end{itemize}
\end{prop}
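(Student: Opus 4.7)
The plan is to mimic the proof of \cite[Proposition~4]{CK1} in our setting, using Proposition~\ref{prop: CK1 3}, Lemma~\ref{lem: CC}, and \cite[Lemma~5.1]{Pcc}.

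For part~(a), surjectivity of $p_1$ is the content of the implication (i)$\Rightarrow$(iii) of Proposition~\ref{prop: CK1 3}: given $x=([\eps],U,V)\in\elde$, condition~(iii) supplies a morphism $\eta\in\cat(M,\susp L)$ with $\phi(\susm\eps,\eta)\neq 0$, together with a submodule $E$ of the middle term of the associated triangle $L\to N\to M\stackrel{\eta}{\to}\susp L$ whose inverse image and image recover $U$ and $V$, and hence a preimage of $x$ in some $\cefg$. To analyse $p_1^{-1}(x)$, I project it to the $[\eta]$-coordinate. The image is the finite disjoint union, over $Y\in\yc$, of the intersections of the stratum $\pcmly$ with the open complement in $\pcml$ of the projective hyperplane $\{[\eta]:\phi(\susm\eps,\eta)=0\}$; each such piece is an affine space. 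For each $[\eta]$ in this image, Lemma~\ref{lem: CC} identifies the set of $E$'s completing the data to a point of $\cefg$ with a $\homb$-torsor, hence an affine space. These two affine fibrations compose to exhibit $p_1^{-1}(x)$ as an extension of affine spaces.

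For part~(b), fix $y=([\eta],E)\in\wmlefg$, write $L\stackrel{i}{\to}N\stackrel{p}{\to}M\stackrel{\eta}{\to}\susp L$ for the triangle associated with $\vph_{M,L}(\eta)$, and set $U=(Fi)^{-1}E$, $V=(Fp)E$. The fiber $p_2^{-1}(y)$ consists of the classes $[\eps]\in\pclm$ with $\phi(\susm\eps,\eta)\neq 0$ and $([\eps],U,V)\in\elde$. Since $(\eta,E)$ already witnesses condition~(iii) of Proposition~\ref{prop: CK1 3} for every such $\eps$, the implication (iii)$\Rightarrow$(i) forces $([\eps],U,V)\in\elde$ automatically whenever $\phi(\susm\eps,\eta)\neq 0$. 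Consequently, $p_2^{-1}(y)$ is the complement in $\pclm$ of the projective hyperplane defined by $\phi(\susm\bullet,\eta)=0$, that is, a nonempty affine space; this establishes both surjectivity and the claimed fiber type.

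For part~(c), choose any point $(([\eps],U,V),([\eta],E))$ of $\cefg$ and apply \cite[Lemma~5.1]{Pcc} to the triangle $L\to N\to M\stackrel{\eta}{\to}\susp L$ equipped with the submodule $E\subseteq FN$; since $\coi(\mte)=\coi Y$ for every $\eta\in\pcmly$, the formula stated there is precisely the asserted identity in $\kop$. The main obstacle I anticipate lies in part~(a): in order to obtain a genuine \emph{extension} of affine spaces one must check that the dimensions of the $\homb$-torsors provided by Lemma~\ref{lem: CC} are locally constant on each stratum $\pcmly$. This amounts to leveraging the discrete invariants defining $\class$ (equality of coindex and of all Grassmannian Euler characteristics of $FY$) in order to control the relevant $\homb$-dimensions, for which one expects to use the 2-Calabi--Yau Serre duality applied to suitable short exact sequences of $B$-modules.
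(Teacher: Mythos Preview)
Your arguments for parts (b) and (c) are correct and coincide with the paper's. The problem is in part (a), where your description of the base of the fibration is wrong and your final worry is misplaced.

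First, the image of the projection of $p_1^{-1}([\eps],U,V)$ to the $[\eta]$-coordinate is \emph{not} the whole complement of the hyperplane $\{\phi(\susm\eps,\cdot)=0\}$ in $\pcml$. A class $[\eta]$ lies in that image only if there exists a submodule $E$ of $F(\mathrm{mt}(\eta))$ with the prescribed image and preimage; by \cite[Lemma~4.2]{Pcc} (equivalently, condition~(ii) of Proposition~\ref{prop: CK1 3}), this happens precisely when $\eta\in\cat(M,\susp L)\cap\im\al'$. Thus the base is
\[
\pb\bigl(\cat(M,\susp L)\cap\im\al'\bigr)\ \setminus\ \{\phi(\susm\eps,\cdot)=0\},
\]
the projectivisation of a \emph{linear} subspace minus a hyperplane, hence a single affine space. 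Your stratification by the $\pcmly$ is irrelevant here, and your claim that each $\pcmly$ intersected with the hyperplane complement is an affine space is false in general: the $\pcmly$ are merely constructible subsets.

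Second, the obstacle you anticipate does not exist. In Lemma~\ref{lem: CC}, the fibre over a fixed $[\eta]$ is in bijection with $\homb(V',W')$, where $V'$ is one of the fixed submodules $U,V$ and $W'$ is the cokernel of the inclusion of the other; both depend only on the base point $([\eps],U,V)$ and not on $\eta$ or on the stratum $\langle Y\rangle$. So the fibre dimension is automatically constant, and no appeal to Serre duality or to the invariants defining $\langle Y\rangle$ is needed. This is exactly how the paper proceeds: one affine base, fibres of constant dimension, hence an extension of affine spaces in the sense defined just before the proposition.
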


\begin{proof}
Let us first prove assertion a). The projection $p_1$
is surjective by the equivalence of i) and iii)
in proposition~\ref{prop: CK1 3}.
Let $X$ be the fiber of $p_1$ above some
$([\eps],U,V)$ in $\elde$. Let $V$ be the set
of all classes $[\eta]$ in $\pb\big(\cat(M,\susp L)\cap\im\al'\big)$
such that $\phi(\susm\eps,\eta)$ does not vanish.
The set $V$ is the projectivization of the complement
in $\cat(M,\susp L)\cap\im\al'$ of the hyperplane
$\ke\phi(\susm\eps,\;)$. Hence $V$ is a vector space.
Let us consider the projection $\pi : X\gfl V$. This projection
is surjective by~\cite[lemma 4.2]{Pcc}. Let $\eta$ represent
a class in $V$,
and let $Fi$, $Fp$ be given by $\vph_{M,L}(\eta)$.
Then the fiber of $\pi$ above $[\eta]$ is given by the
submodules $E$ of $FY$ such that $(Fi)^{-1}E=V$ and $(Fp)E=U$.
Lemma~\ref{lem: CC} thus shows that the fibers of $\pi$
are affine spaces of constant dimension.

Let us prove assertion b). Let $([\eta],E)$ be in $\wmlefg$.
The fiber of $p_2$ above $([\eta],E)$ consists of the elements
of the form $\big(([\eps],U,V),([\eta],E)\big)$ where $U$ and $V$
are fixed submodules given by $[\eta]$ and $E$, and $[\eps]\in\pclm$
is such that $\phi(\susm \eps,\eta)$ does not vanish.
Therefore the projection $p_2$ is surjective and its fibers
are affine spaces.

To prove assertion c), apply~\cite[lemma 5.1]{Pcc} and remark that
if $Y'$ belongs to $\class$, then $Y'$ and $Y$ have the same
coindex.
\end{proof}

As a consequence, we obtain the following equalities:
$$
\chi(\cef) = \chi(\elde) \text{ and } 
\chi\big{(}\cefg\big{)} = \chi\big{(}\wmlefg\big{)}.
$$

We are now able to compute $s_2$ : 
\begin{eqnarray*}
 s_2 & = & \sum_{e,f} \chi(\elde)\underline{x}^{-\coi (L\oplus M)} 
\prod_{i=1}^n x_i^{\langle S_i, e+f \rangle_a} \\
 & = & \sum_{e,f} \chi(\cef)\underline{x}^{-\coi (L\oplus M)} 
\prod_{i=1}^n x_i^{\langle S_i, e+f \rangle_a} \text{ by }\ref{prop: CK1 4}\text{ a)} \\
 & = & \sum_{e,f,g,\class} \chi\big{(}\cefg\big{)}
\underline{x}^{-\coi (L\oplus M)} 
\prod_{i=1}^n x_i^{\langle S_i, e+f \rangle_a} \\
 & = & \sum_{e,f,g,\class} \chi\big{(}\cefg\big{)}
\underline{x}^{-\coi Y} 
\prod_{i=1}^n x_i^{\langle S_i, g \rangle_a} \text{ by }\ref{prop: CK1 4}\text{ c)}\\
 & = & \sum_{e,f,g,\class} \chi\big{(}\wmlefg\big{)}
\underline{x}^{-\coi Y} 
\prod_{i=1}^n x_i^{\langle S_i, g \rangle_a} \text{ by }\ref{prop: CK1 4}\text{ b)}\\
 & = & \sum_{g,\class} \chi\big{(}\wmlg\big{)}\underline{x}^{-\coi Y} 
\prod_{i=1}^n x_i^{\langle S_i, g \rangle_a}\\
 & = & \sum_{g,\class} \chi\big{(}\pcmly\big{)}\chi(\grg FY)
\underline{x}^{-\coi Y} \prod_{i=1}^n x_i^{\langle S_i, g \rangle_a}
\text{ by }(\ast\ast)\\
 & = & \sum_{\class} \chi\big{(}\pcmly\big{)} X_Y.
\end{eqnarray*}

Gathering our results we have:
\begin{eqnarray*}
 \dim \cat(L,\susp M)X_L X_M & = & s_1 + s_2 \\
 & = & \sum_{\class} \chi\big{(}\pclmy\big{)} X_Y + \sum_{\class} \chi\big{(}\pcmly\big{)} X_Y,
\end{eqnarray*}
which proves Theorem~\ref{theo: mf}.

\section{Fu--Keller's cluster character}

In this section, it is proven that the cluster character $X'$ defined
by C. Fu and B. Keller, in \cite{FK}, satisfies a multiplication formula similar
to that of \cite{GLSmf}. In the case of the categories
$\cat_w$ of Geiss--Leclerc--Schr\"oer, when $T$ a reachable cluster-tilting object,
this also follows from~\cite[Theorem 4]{GLSAnsatz}.
Note that the notations used in this section differ from those of \cite{FK}.

Let $k$ be the field of complex numbers and
let $\ec$ be a $k$-linear Hom-finite Frobenius category with split idempotents.
Assume that its stable category $\cat$ is 2-Calabi--Yau and that $\ec$
contains a cluster tilting object $T$. Denote the endomorphism algebra
$\End_\ec(T)$ by $A$ and recall that the algebra $\End_\cat(T)$
is denoted by $B$. The object $T$ is assumed to be basic
with indecomposable summands $T_1,\ldots,T_n$ where
the projective-injective ones are precisely $T_{r+1},\ldots,T_n$.
For $i=1,\ldots,n$, we denote by $S_i$ the simple top
of the projective $A$-module $\ec(T,T_i)$.
The modules in $\modb$ are identified with the modules
in $\moda$ without composition factors isomorphic to one of the
$S_i$, $r<i\leq n$.

\subsection{Statement of the multiplication formula}

We first recall the definition of $X'$ from \cite{FK}.
For any two finitely generated $A$-modules
$L$ and $M$, put
$$
\ps{L}{M}_3 = \sum_{i=0}^3 (-1)^i \dim_k\ext^i_A(L,M).
$$

\vspace{3mm}
\noindent {\bfseries Proposition} [Fu-Keller]:
\emph{If $L,M\in\modb$ have the same image in
$\kzero(\moda)$, then we have
$$
\ps{L}{Y}_3 = \ps{M}{Y}_3
$$
for all finitely generated $A$-module $Y$.}
\vspace{3mm}

Therefore, the number $\ps{L}{S_i}_3$ only depends
on the dimension vector of $L$, for $L\in\modb$.
Put $\ps{\vdim L}{S_i}_3 = \ps{L}{S_i}_3$,
for $i=1,\ldots,n$.

Recall that $F$ is the functor $\cat(T,?)$ from
the category $\cat$ to $\modb$. Denote by $G$
the functor $F\susp \simeq \ext^1_\ec(T,?)$.
For $a\in\kzero(\proj A)$, the notation
$\xs^a$ stands for the product $\prod x_i^{a_i}$,
where $a_i$ is the multiplicity of $[P_i]$ in $a$.

\vspace{3mm}
\noindent {\bfseries Definition} [Fu-Keller]
\emph{For $M\in\ec$, define the Laurent polynomial
$$
X_M' = \xs^{\ind M}\sum_{e\in \kzero(\moda)}
\chi(\gre(GM))\prod_{i=1}^n x_i^{-\ps{e}{S_i}}.
$$}
\vspace{3mm}

Note that the $A$-module $GM$ does not have composition
factors isomorphic to one of the $S_i$, $r<i\leq n$,
so that the sum might as well be taken over
the Grothendieck group $\ko$.

\vspace{3mm}
\noindent {\bfseries Theorem} [Fu-Keller]
\emph{The map $M\mapsto X_M'$ is a cluster character
on $\ec$. It sends $T_i$ to $x_i$, for all $i=1,\ldots,n$}
\vspace{3mm}

For a class $\eps\in\ext^1_\ec(L,M)$,
let $\mte$ be the (isomorphism class of any)
middle term of a conflation
which represents $\eps$.

\begin{theo}\label{theo: FKmf}
For all $L,M\in\ec$, we have
$$
\chi(\pelm)X_L'X_M' =
\int_{[\eps]\in\pelm} X_{\mte}' +
\int_{[\eps]\in\peml} X_{\mte}'.
$$
\end{theo}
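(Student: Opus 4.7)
The plan is to adapt the proof of Theorem~\ref{theo: mf} to the Frobenius setting, replacing the triangulated data $\clm$, $F = \cat(T,-)$ and $\coi$ by their Frobenius analogues $\elm$, $G = \ext^1_\ec(T,-)$ and $\ind$. The bridge between the two settings is the canonical projection $\Pi : \ec \gfl \cat$: pushing out a conflation $M \infl Y \defl L$ representing $\eps\in\elm$ along $M \infl IM \defl \susp M$ produces a morphism $L \fl \susp M$ whose cone is $\Pi Y$, giving a canonical bijection $\pelm \simeq \pclm$ that identifies middle terms up to $\Pi$. Constructibility of all sets that will appear is then inherited from section~\ref{ssection: H stable}.

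First I would reprove the stratification results of section~\ref{ssection: clmy} in $\elm$: define a Frobenius analogue of $\clmy$ by requiring that $\mte'$ have the same index in $\kzero(\proj A)$ and the same Euler characteristics of the Grassmannians $\gre G\mte'$ as some fixed $Y\in\yc$. The analogues of lemma~\ref{lem: coind mt} and corollary~\ref{cor: coind constr} (with $\ind$ in place of $\coi$ and $G$ in place of $F$) then yield a finite constructible decomposition of $\elm$. Next, expand $\chi(\pelm) X'_L X'_M$ as a sum over dimension vectors $e,f\in\kzero(\moda)$ of $\chi(\gre GL)\chi(\grf GM)$ times a monomial, and introduce varieties $\wefg$ as in section~\ref{section: proof mf} but with $FY'$ replaced by $G\mte$.

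The Caldero--Chapoton lemma~\ref{lem: CC} applies verbatim to identify the fibers of the projection $\coprod \wef \fl \elef$ above its image $\elun$ as affine spaces, which delivers the first integral on the right-hand side. The complement $\elde$ is controlled by the 2-Calabi--Yau exchange argument: 2-Calabi--Yau duality in $\cat$ lifts to a perfect pairing $\elm \otimes \ext^1_\ec(M,L) \fl k$ (via $\phi$ and the identification $\elm \simeq \clm$), so the analogues of propositions~\ref{prop: CK1 3} and~\ref{prop: CK1 4} go through with essentially identical proofs, yielding the second integral.

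The hardest step is the exponent-matching identity analogous to proposition~\ref{prop: CK1 4}~c), i.e.\ the Frobenius version of \cite[lemma 5.1]{Pcc}: one must verify that for dimension vectors $e,f,g$ arising from a submodule $E\subseteq G\mte$ of dimension $g$ whose image in $GL$ and preimage in $GM$ have dimensions $e$ and $f$, the exponent $\ind(L\oplus M) - \sum_i \ps{e+f}{S_i}_3 [P_i]$ coincides with $\ind(\mte) - \sum_i \ps{g}{S_i}_3 [P_i]$. This identity encodes how the Frobenius index $\ind$ and the truncated Euler form $\ps{\cdot}{\cdot}_3$ interact across a conflation, and is the one ingredient specific to the Fu--Keller setting where the computation genuinely differs from the triangulated case.
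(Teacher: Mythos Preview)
Your proposal is correct and follows essentially the same approach as the paper: the paper's proof explicitly states that it repeats section~\ref{section: proof mf} with $G$, $\ind$ and $\ps{\cdot}{\cdot}_3$ in place of $F$, $\coi$ and $\ps{\cdot}{\cdot}_a$, and that the single genuinely new ingredient is the exponent-matching identity you isolate, which the paper supplies by citing \cite[lemma~3.4]{FK} in place of \cite[lemma~5.1]{Pcc}. Your identification of that lemma as the crux, and of the rest as a straightforward transcription via the bijection $\pelm\simeq\pclm$, matches the paper exactly.
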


The proof of this theorem is postponed to
section \ref{ssection: proof FKmf}. The next
section is dedicated to proving that the map
sending $[\eps]$ to $X_{\mte}'$ is "integrable
with respect to $\chi$".

\subsection{Constructibility}

Let $L,M$ be objects in $\ec$. 
To any $\eps\in\cat(L,\susp M)$ corresponds
a class in $\ext^1_\ec(L,M)$.
Given a conflation in this class,
we denote by $\mte$
the isomorphism class in $\ec$ of
its middle term. Note
that this notation does not coincide
with the one in section \ref{section: notations}.
Nevertheless, those two definitions
yield objects which are isomorphic in $\cat$.

\begin{lem} The map
$$
\lambda : \clm \gfl \kzero(\operatorname{proj} A)
$$
which sends $\eps$ to the index of $\mte$
is constructible.
\end{lem}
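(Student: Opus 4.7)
The plan is to mirror the proof of Corollary \ref{cor: coind constr} from the triangulated setting, substituting the Frobenius index for the coindex. The key input will be an additivity formula analogous to Lemma \ref{lem: coind mt}: for a conflation $M \stackrel{i}{\rightarrowtail} Y \stackrel{p}{\twoheadrightarrow} L$ in $\ec$ representing the class of $\eps$, I expect a formula of the shape
\[
\ind Y \;=\; \ind L + \ind M - \kappa(N),
\]
where $N$ is an $A$-module canonically extracted from the long exact sequence
\[
0 \fl FM \fl FY \fl FL \fl GM \fl GY \fl GL \fl 0
\]
obtained by applying $\ec(T,-)$ to the conflation---for instance $N = \coke(Fp) = \im(F\eps)$---and $\kappa \colon \kzero(\moda) \fl \kzero(\operatorname{proj} A)$ is an additive invariant depending only on the class $[N]$. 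Such a formula should be obtainable by a dimension-shifting argument in $\ec$ in the spirit of \cite{Pcc} section 3, using that the Frobenius index is additive on conflations with outer terms in $\operatorname{add} T$ and that $\ext^1_\ec(T,T) = 0$.

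Once this formula is in hand, the constructibility of $\lambda$ follows formally. Since $\vdim N$ is bounded by $\vdim FL$, the correction term $\kappa(N)$ takes only finitely many values and depends only on the dimension vector of $N$. The constructible-cones hypothesis---verified for stable categories of Frobenius categories in Section \ref{ssection: H stable}---provides the constructible lift
\[
\vph_{L,M} \colon \clm \gfl \coprod_{d} \rep_d(B \overrightarrow{A_4})
\]
of the orbit assignment $\eps \mapsto (FM \to FY \to FL \to GM)$. Composing $\vph_{L,M}$ with the extraction of the cokernel of $Fp$ (constructible by Lemma \ref{lem: coker}), then with $\vdim$ and $\kappa$, and finally subtracting the result from the constant $\ind L + \ind M$, realises $\lambda$ as a composition of constructible maps and functions, hence constructible.

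The main obstacle is the first step: pinning down the precise additivity formula for the Frobenius index under a conflation and verifying that the correction term is genuinely a function of $[N] \in \kzero(\moda)$ rather than of finer data. Once that is settled, Lemma \ref{lem: coker}, the constructible-cones assumption, and the finiteness of the range of $\kappa$ combine routinely to yield the statement.
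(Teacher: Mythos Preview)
Your proposal is correct and matches the paper's approach: the additivity formula you anticipate is precisely \cite[Lemma~3.4]{FK}, which gives $\ind Y = \ind(L\oplus M) - \sum_i \langle d, S_i\rangle_3[P_i]$ with $d = \vdim\coke\ec(T,p)$, and the paper then concludes via Lemma~\ref{lem: coker} together with Lemma~\ref{lem: mu constr} (rather than the full constructible-cones lift $\vph_{L,M}$, though either route works). One notational slip: in your long exact sequence you write $FM, FY, FL$ for what are really $\ec(T,M), \ec(T,Y), \ec(T,L)$, since $F = \cat(T,-)$ in the paper's conventions; this is harmless here because $\coke\ec(T,p) \cong \coke Fp \cong \im F\eps$.
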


\begin{proof}
Let $\eps\in\clm$, and let
$\xymatrix{M\;\dri^i & Y\dre^p & L}$
be a conflation whose class in $\ext^1_\ec(L,M)$
corresponds to $\eps$. Let $d$ be the dimension
vector of $\coke \ec(T,p)$. By the proof of
\cite[lemma 3.4]{FK}, we have
$$
\ind Y = \ind(L\oplus M) - \sum_{i=1}^n \ps{d}{S_i}_3[P_i].
$$
Thanks to lemma \ref{lem: mu constr} and lemma \ref{lem: coker},
the formula above shows that the map $\lambda$ is constructible.
\end{proof}

If an object $Y$ belongs to $\mte$ for some $\eps\in\ext^1_\ec(L,M)$,
we let $\dclass$ denote the set of all isomorphism classes of
objects $Y'\in\ec$ such that:
\begin{itemize}
\item $Y'$ is the middle term of some conflation in $\ext^1_\ec(L,M)$,
\item $\ind Y'=\ind Y$ and
\item for all $e$ in $\ko$, we have
$\chi(\gre GY') = \chi(\gre GY).$
\end{itemize}
We denote the set of all $\eps\in\clm$ with
$\mte\in\dclass$ by $\elmy$.
Note that the set $\elmy$ is a (constructible) subset
of $\clmy$.

As for proposition \ref{prop: clmy constr},
the following proposition follows easily from
corollary \ref{cor: chi constr} and the previous lemma.

\begin{prop}
The sets $\elmy$ are constructible subsets
of $\clm$. Moreover, the set $\clm$
is a finite disjoint union of such constructible subsets.
\end{prop}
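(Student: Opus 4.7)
The plan is to mimic the proof of proposition~\ref{prop: clmy constr}, replacing the pair (coindex, $F$) used there by the pair (index, $G$). Fix a conflation in $\ec$ with middle term $Y$ whose class in $\ext^1_\ec(L,M)$ corresponds to a morphism $\eps_0\in\clm$. Writing $Y'$ for the middle term of an arbitrary $\eps'\in\clm$, and unwinding the definition of $\dclass$, the morphism $\eps'$ belongs to $\elmy$ if and only if $\lambda(\eps')=\lambda(\eps_0)$ and $\chi(\gre GY')=\chi(\gre GY)$ for every dimension vector $e\leq\vdim GY$. The first condition cuts out a constructible subset by the previous lemma.

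For the second family of conditions, I would establish that, for each fixed $e$, the function $\eps'\mapsto\chi(\gre GY')$ is constructible on $\clm$; this is the precise analogue of corollary~\ref{cor: chi constr}. The key observation is the identification $GY'\cong\cat(T,\susp Y')=F\susp Y'$, together with the fact that $\susp Y'$ is the middle term of the triangle $\susp M\fl\susp Y'\fl\susp L\stackrel{\susp\eps'}{\fl}\susp^2 M$ obtained by applying the shift functor. Composing the $k$-linear (hence algebraic, hence constructible) map $\susp\colon\clm\fl\cat(\susp L,\susp^2 M)$ with $\vph_{\susp L,\susp M}$, which is well-defined and constructible by section~\ref{ssection: H stable} since $\cat$ is the stable category of a Frobenius category, yields a constructible map $\clm\fl\coprod_d\rep_d(B\overrightarrow{A_4})$ whose middle vertex records the $B$-module structure of $F\susp Y'$. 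Applying lemma~\ref{lem: mue} then gives the desired constructibility of $\eps'\mapsto\chi(\gre GY')$.

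Combining these two ingredients as in proposition~\ref{prop: clmy constr}, $\elmy$ is a finite intersection of fibers of constructible functions and is therefore constructible. For the finite disjoint union decomposition, $\lambda$ takes finitely many values on $\clm$, and for each $e$ in the bounded range (determined, say, by the dimension of $F\susp L\oplus F\susp M$) the function $\eps'\mapsto\chi(\gre GY')$ also takes finitely many values; consequently only finitely many equivalence classes $\dclass$ correspond to nonempty $\elmy$, and $\clm$ is their finite disjoint union.

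The step I expect to be the main technical point is the constructibility of $\eps'\mapsto F\susp Y'$ as a $B$-module; the shift-and-invoke-$\vph_{\susp L,\susp M}$ trick above avoids having to rework the long-exact-sequence manipulations of section~\ref{ssection: clmy} from scratch, and it relies crucially on the fact that in the Frobenius setting the constructibility-of-cones hypothesis holds for arbitrary pairs, in particular for $(\susp L,\susp M)$.
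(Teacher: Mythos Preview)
Your proposal is correct and is essentially the approach the paper has in mind: the paper's proof merely says the proposition ``follows easily from corollary~\ref{cor: chi constr} and the previous lemma,'' and the shift-and-apply-$\vph_{\susp L,\susp M}$ device you spell out is exactly how corollary~\ref{cor: chi constr} is made to speak about $GY'=F\susp Y'$ rather than $FY'$---indeed the paper uses this same device verbatim at the start of section~\ref{ssection: proof FKmf}.
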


This proposition shows that the right hand side
in the multiplication formula of theorem \ref{theo: FKmf}
is well-defined.

\subsection{Proof of theorem \ref{theo: FKmf}}\label{ssection: proof FKmf}

The proof is essentially the same as that in section
\ref{section: proof mf}, where the use of
\cite[lemma 5.1]{Pcc} is replaced by that of
\cite[lemma 3.4]{FK}.

Let $L$ and $M$ be two objects in $\ec$.
Let $\eps$ be a morphism in $\elmy$ for some $Y\in\ec$,
and let
$M\stackrel{-i}{\gfl} Y' \stackrel{-p}{\gfl} L \stackrel{-\eps}{\gfl} \susp M$
be a triangle in $\cat$.
Remark that, by section \ref{ssection: H stable}, the category
$\cat$ has constructible cones.
The image of $\susp\eps$ under
$\vph_{\susp L,\susp M}$ lifts the orbit of the matrix representation
of $\overrightarrow{A_4}$ in $\modb$ given by
$\xymatrix@-.1pc{GM \dr^{Gi} & GY' \dr^{Gp} &
GL \dr^{G\eps\;\;\;} & G\susp M}.$
In all of this section, we will take the liberty of
denoting by $Gi$, $Gp$ and $GY'$ the image $\vph_{\susp L,\susp M}(\susp\eps)$.
Denote by $\Delta$ the dimension vector $\vdim GL + \vdim GM$.
For any non-negative $e$, $f$ and $g$ in $\ko$,
let $\ewefg$ be the subset of
$$\pelmy\times\coprod_{d\leq \Delta}
\prod_{i=1}^n \gr_{g_i}(k^{d_i})$$
formed by the pairs $([\eps],E)$ such that
$E$ is a submodule of $GY'$ of dimension vector $g$,
$\vdim (Gp)E = e$ and $\vdim (Gi)^{-1}E = f$.
We let
\begin{itemize}
 \item $\ewg$ denote the union of all $\ewefg$ with
$e\leq\vdim GL$ and

$f\leq\vdim GM$ and
 \item $\ewef$ denote the union
of all $\ewefg$ with $g\leq \vdim GL + \vdim GM$.
\end{itemize}
Note that, by lemma \ref{lem: wefg constr},
the sets $\ewefg$ are constructible.

The fiber above the class $[\eps]$ of the first projection $\ewg\fl\pelmy$
is $\{[\eps]\}\times\grg GY'$ and thus all fibers have Euler characteristics
equal to that of $\grg GY$. Therefore we have:
$$
(\ast\ast) \;\;\;\;\; \chi\big{(}\ewlmg\big{)} = \chi\big{(}\pelmy\big{)}\chi(\grg GY).
$$

Define $\eelef$ to be the variety $\pclm \times \gre GL \times \grf GM$.
Consider the following map:
\begin{eqnarray*}
 \coprod_{\dclass} \ewef & \stackrel{\psi}{\gfl} & \eelef \\
([\eps],E) & \longmapsto & \big{(}[\eps], (Gp)E, (Gi)^{-1}E\big{)}.
\end{eqnarray*}
By lemma \ref{lem: mu constr},
the map $\psi$ is constructible.

As in section \ref{section: proof mf},
let $\eelun$ be the subvariety of $\eelef$
formed by the points in the image of $\psi$,
and let $\eelde$ be the complement of $\eelun$ in $\eelef$.

Using the notations above, we have
$$
\pclm X_L' X_M' = \xs^{\ind(L\oplus M)}
\sum_{e,f} \chi(\eelef)\prod_{i=1}^n x_i^{-\langle e+f , S_i \rangle_3}
$$
For $j=1,2$, denote by $\sigma_j$ the term
$$
\xs^{\ind(L\oplus M)}
\sum_{e,f} \chi(\lc_j(e,f))\prod_{i=1}^n x_i^{-\langle e+f , S_i \rangle_3}
$$
so that
$\pclm X_L' X_M' = \sigma_1 + \sigma_2$.

We first compute the sum $\sigma_1$.
As shown in~\cite{CC}, the fibers of $\psi$ over $\eelun$ are affine spaces
(see lemma~\ref{lem: CC}). Therefore we have the following equality
between Euler characteristics:
$$
\sum_{\dclass} \chi(\ewef) = \chi(\eelun),
$$
which implies the equality
$$
\sigma_1 = \sum_{e,f,\dclass} \chi\big{(}\ewef\big{)}\xs^{\ind (L\oplus M)}
\prod_{i=1}^n x_i^{-\langle e+f , S_i \rangle_3}.
$$

If the pair $([\eps],E)$ belongs to $\ewefg$, then by~\cite[lemma 3.4]{FK},
we have 
$$
\ind(L\oplus M) - \sum_{i=1}^n \langle e+f , S_i \rangle_3 [P_i]
= \ind (\mte) - \sum_{i=1}^n \langle g , S_i \rangle_3 [P_i]
$$
and $\ind(\mte) = \ind Y$ since the morphism $\eps$ is in $\elmy$.
Therefore,
\begin{eqnarray*}
 \sigma_1 & = & \sum_{e,f,g,\dclass} \chi\big{(}\ewefg\big{)}\xs^{\ind (L\oplus M)}
\prod_{i=1}^n x_i^{-\langle e+f , S_i \rangle_3} \\
 & = & \sum_{e,f,g,\dclass} \chi\big{(}\ewefg\big{)}\xs^{\ind Y}
\prod_{i=1}^n x_i^{-\langle g , S_i \rangle_3} \\
 & = & \sum_{g,\dclass} \chi \big{(} \ewlmg \big{)} \xs^{\ind Y}
\prod_{i=1}^n x_i^{-\langle g , S_i \rangle_3} \\
 & = & \sum_{\dclass} \sum_g \chi\big{(}\pelmy\big{)}\chi(\grg GY) \xs^{\ind Y}
\prod_{i=1}^n x_i^{-\langle g , S_i \rangle_3} \text{ by }(\ast\ast)\\
 & = & \sum_{\dclass} \chi\big{(}\pelmy\big{)} X_Y'.
\end{eqnarray*}

We now consider the sum $\sigma_2$.
Recall that we denote by $\phi$ the
duality pairing:
\begin{eqnarray*}
 \phi : \cat(\susm L,M)\times\cat(M,\susp L) & \gfl & k \\
(a,b) & \longmapsto & \phi_{L,M}(a)b
\end{eqnarray*}
induced by the 2-Calabi--Yau property of $\cat$.

Let $\ecefg$ consist of all pairs $\big{(} ([\eps], U, V) , ([\eta],E) \big{)}$
in $\eelde \times \ewmlg$ such that
$\phi (\susm\eps,\eta) \neq 0$, $(G\iota)^{-1}E = V$ and $(G\pi)E = U$
(where $G\iota$, $G\pi$ are given by $\vph_{\susp M,\susp L}(\susp\eta)$).
The set $\ecefg$ is constructible, by lemma \ref{lem: mu constr}.
Let $\ecef$ be the union of all $\ecefg$,
where $Y$ runs through a set of representatives
for the classes $\dclass$, and $g$ through $\ko$.
We then consider the following two projections
$$
\xymatrix{
\ecef \bas_{\rho_1} & \text{and} & \ecefg \bas_{\rho_2} \\
\eelde & & \ewmlefg .
}
$$

As shown in section \ref{section: proof mf}, the projections
$\rho_1$ and $\rho_2$ are surjective. Moreover, by
\cite[lemma 3.4]{FK},
if $\ecefg$ is not empty, then we have
$$\ind (L\oplus M) - \sum_{i=1}^n \langle e+f ,S_i \rangle_3 [P_i]
= \ind Y - \sum_{i=1}^n \langle g , S_i \rangle_3 [P_i] .$$

As a consequence, we obtain the following equalities:
$$
\chi(\ecef) = \chi(\eelde) \text{ and } 
\chi\big{(}\ecefg\big{)} = \chi\big{(}\ewmlefg\big{)}.
$$

We are now able to compute $\sigma_2$ : 
\begin{eqnarray*}
 \sigma_2 & = & \sum_{e,f} \chi(\eelde)\xs^{\ind (L\oplus M)} 
\prod_{i=1}^n x_i^{-\langle e+f , S_i \rangle_3} \\
 & = & \sum_{e,f} \chi(\ecef)\xs^{\ind (L\oplus M)} 
\prod_{i=1}^n x_i^{-\langle e+f  , S_i \rangle_3} \\
 & = & \sum_{e,f,g,\dclass} \chi\big{(}\ecefg\big{)} \xs^{\ind(L\oplus M)} 
\prod_{i=1}^n x_i^{-\langle e+f , S_i \rangle_3} \\
 & = & \sum_{e,f,g,\dclass} \chi\big{(}\ecefg\big{)} \xs^{\ind Y} 
\prod_{i=1}^n x_i^{-\langle g , S_i \rangle_3} \\
 & = & \sum_{e,f,g,\dclass} \chi\big{(}\ewmlefg\big{)} \xs^{\ind Y} 
\prod_{i=1}^n x_i^{-\langle g , S_i \rangle_3} \\
 & = & \sum_{g,\dclass} \chi\big{(}\ewmlg\big{)} \xs^{\ind Y} 
\prod_{i=1}^n x_i^{-\langle g , S_i \rangle_3} \\
 & = & \sum_{g,\dclass} \chi\big{(}\pemly\big{)}\chi(\grg GY) \xs^{\ind Y}
\prod_{i=1}^n x_i^{-\langle g , S_i \rangle_3} \\
 & = & \sum_{\dclass} \chi\big{(}\pemly\big{)} X_Y'.
\end{eqnarray*}
We thus have:
\begin{eqnarray*}
 \pclm X_L' X_M' & = & \sigma_1 + \sigma_2 \\
 & = & \!\!\!\!\! \sum_{\dclass} \chi\big{(}\pelmy\big{)} X_Y' +
 \sum_{\dclass} \chi\big{(}\pemly\big{)} X_Y',
\end{eqnarray*}
which concludes the proof.

\begin{acknowledgements}\label{ackref}
This article is part of my PhD thesis under the supervision of Bernhard Keller.
I would like to thank him deeply for his outstanding supervision
and for pointing out reference~\cite{JoyceConstr} to me.
I would like to thank Bernt Tore Jensen for his clear explanations on
constructible sets and locally closed sets. The definition of a constructible
map has been changed according to his remarks. Finally, I would like to thank
the anonymous referee for his valuable comments which helped improve the exposition
of the paper.
\end{acknowledgements}

%
%

\affiliationone{
   Yann Palu\\
   School of Mathematics
   University of Leeds\\
   Leeds
   LS2 9JT\\
   U.K.
   \email{ypalu@maths.leeds.ac.uk}}
\end{document}